\newtheorem{theorem}{Theorem}[section]
\newtheorem{lemma}[theorem]{Lemma}
\newtheorem{proposition}[theorem]{Proposition}
\newtheorem{cor}[theorem]{Corollary}
\newtheorem{definition}[theorem]{Definition}
\newtheorem{example}[theorem]{Example}
\theoremstyle{remark}
\newtheorem{remark}[theorem]{\bf{Remark}}
\numberwithin{equation}{section}
\begin{document}

\title [Numerical range of Toeplitz and Composition operators]{\Small{Numerical range of Toeplitz and Composition operators on weighted Bergman spaces}}
	\author[A. Sen, S. Halder, R. Birbonshi, K. Paul]{Anirban Sen, Subhadip Halder, Riddhick Birbonshi, Kallol Paul}
	
\address[Sen] {Department of Mathematics, Jadavpur University, Kolkata 700032, West Bengal, India}
\email{anirbansenfulia@gmail.com}

\address[Halder]{Department of Mathematics, Jadavpur University, Kolkata 700032, West Bengal, India}
\email{subhadiphalderju@gmail.com}

\address[Birbonshi] {Department of Mathematics, Jadavpur University, Kolkata 700032, West Bengal, India}
\email{riddhick.math@gmail.com}

\address[Paul] {Department of Mathematics, Jadavpur University, Kolkata 700032, West Bengal, India}
\email{kalloldada@gmail.com}


\subjclass[2020]{Primary: 47A12,47B38; Secondary: 47A05,47B33,47B35.}

\keywords{Numerical range, Toeplitz operator, weighted composition operator, weighted Bergman space.}

\maketitle	

\begin{abstract} 
	In this paper we completely describe the numerical range of Toeplitz operators on weighted Bergman spaces with harmonic symbol. We also characterize the numerical range of weighted composition operators on weighted Bergman spaces and classify some sets which are the numerical range of composition operators. We investigate the inclusion of zero in the numerical range, and compute the radius of circle and ellipse contained in the numerical range of weighted composition operators on weighted Bergman spaces.
\end{abstract}

\section{Introduction}
Let $\mathbb{B}(\mathbb{H})$ be the $C^*$-algebra of all bounded linear operators on a complex Hilbert space $\mathbb{H}.$ For $T \in \mathbb{B}(\mathbb{H}),$ the numerical range of $T,$ denoted by $W(T),$ is the subset of the complex plane $\mathbb{C}$ defined by
$$W(T)=\left\{\langle Tf,f \rangle : f \in \mathbb{H}, \|f\|=1 \right\}.$$ 
 It is well known that $W(T)$ is a bounded and convex subset of $\mathbb{C}$. The spectrum of $T,$ denoted by $\sigma(T),$ is contained in the closure of $W(T).$ We refer to \cite{GR_BOOK,wu_gau_book} for the detailed proofs of these results and other properties of the numerical range.

The numerical range of Toeplitz and composition operators have been studied over the years. In 1972, Klein \cite{K_PAMS_1972} completely described the numerical range of Toeplitz operators on the Hardy-Hilbert space of the unit disk. Later, the numerical range of Toeplitz operators on Bergman space and polydisk were studied in \cite{G_AAA_2009,T_JOT_1995,WW_IEOT_2009}.
In \cite{M_LAA_2001}, the numerical ranges of composition operators on the Hardy-Hilbert space induced by monomials were characterized. Bourdon and Shapiro \cite{BS_IEOT_2002,BS_JMAA_2000} studied the numerical range of composition operators and the containment of the origin. 
Recently, the numerical range of weighted composition operators on Hardy-Hilbert space and  weighted Bergman spaces were explored in \cite{GJS_JMAA_2014,ZOR_JMAA_2018}.

In this article, we study the numerical ranges of Toeplitz operators and weighted composition operators on weighted Bergman spaces. The article is structured as follows. In Section \ref{S0}, we introduce some notations, recall some definitions, and present some preliminary results. In Section \ref{S1}, we completely describe the numerical range of Toeplitz operators on the weighted Bergman spaces with harmonic symbol. We provide an example to justify that the harmonic condition is necessary for the characterization given in Theorem \ref{T3}. Then we obtain the numerical range of some particular classes of weighted composition operators on the weighted Bergman spaces. Further, we characterize some bounded and convex sets which are the numerical range of weighted composition operators. Our main aim of Section \ref{S2} is to study when the origin is contained in the numerical range of weighted composition operators acting on weighted Bergman spaces. Then we derive some sufficient conditions about the closedness of the numerical range of weighted composition operators. In Section \ref{S3}, we identify several classes of weighted composition operators whose numerical range includes a circle or an ellipse. Furthermore, we determine the radius of the circle as well as the lengths of the minor and major axes of the ellipse.

\section{Preliminaries}\label{S0}

Let $\mathbb{D}=\{z \in \mathbb{C} : |z|<1\}$ be the open unit disk. Let $H(\mathbb{D})$ be the space all analytic functions on $\mathbb{D}$ and $H^{\infty}$ be the space of all bounded analytic functions on $\mathbb{D}.$ We will use the notations $\overline{X}, \partial X,int~X, Rel~int~X$ and $X^{\wedge}$ for the closure, boundary, interior, relative interior and convex hull, respectively, of the set $X \subset \mathbb{C}.$ 

For $\alpha>-1,$ the weighted Bergman space $L_a^2(dA_{\alpha})$ on the unit disk is defined as
$$L_a^2(dA_{\alpha})=\left\{f\in H(\mathbb{D}) : \int_{\mathbb{D}}|f(z)|^2dA_{\alpha}(z)<\infty \right\},$$
where $dA$ denotes the normalized Lebesgue area measure on $\mathbb{D}$ and 
$$dA_{\alpha}(z)=(\alpha+1)(1-|z|^2)^{\alpha}dA(z).$$
Clearly, $L_a^2(dA_{\alpha})=H(\mathbb{D})\cap L^2(\mathbb{D}, dA_{\alpha})$ and when the weight parameter $\alpha=0,$ the weighted Bergman space becomes the classical Bergman space $L_a^2(dA).$  Here we note that $L_a^2(dA_{\alpha})$ is a closed subspace of $L^2(\mathbb{D}, dA_{\alpha})$ and has the orthonormal basis $\{e_n\}_{n=0}^{\infty},$
where
$$e_n(z)=\sqrt{\frac{\Gamma(n+\alpha+2)}{n!\Gamma(\alpha+2)}}z^n~~\mbox{for all $n \geq 0$}.$$
For $f,g \in L_a^2(dA_{\alpha})$ the inner product of on $L_a^2(dA_{\alpha})$ can also be expressed as
\begin{align*}
	\langle f,g \rangle=\sum_{n=0}^{\infty}\frac{n!\Gamma(\alpha+2)}{\Gamma(n+\alpha+2)}\hat{f}_n\bar{\hat{g}}_n,
\end{align*}
where $f(z)=\sum_{n=0}^{\infty}\hat{f}_nz^n$ and $g(z)=\sum_{k=0}^{\infty}\hat{g}_nz^n.$
It is very well known that the weighted Bergman spaces are reproducing kernel Hilbert space and the reproducing kernel of $L_a^2(dA_{\alpha})$ at the point $w \in \mathbb{D}$ is given by
$$k^{\alpha}_w(z)=\frac{1}{(1-\bar{w}z)^{\alpha+2}}.$$ The normalized reproducing kernel at $w \in \mathbb{D}$ is given by
$$\hat{k}^{\alpha}_w(z)=\frac{(1-|w|^2)^{\frac{\alpha}{2}+1}}{(1-\bar{w}z)^{\alpha+2}}.$$
For more details about the weighted Bergman space we refer the book \cite{ZHU_BOOK}.

 Let $P_{\alpha}$ denote the orthogonal projection of $L^2(\mathbb{D}, dA_{\alpha})$ onto $L_a^2(dA_{\alpha}).$ Let $L^{\infty}(\mathbb{D}, dA_{\alpha})$ be the space of all complex measurable functions $\phi$ on $\mathbb{D}$ such that 
 $$\|\phi\|_{\infty,\alpha}=\sup\{ c\geq 0 : A_{\alpha}\left(\{z \in \mathbb{D} : |\phi(z)|>c\}\right)>0\}<\infty.$$
 For $\phi \in L^{\infty}(\mathbb{D}, dA_{\alpha}),$ the operator $T_{\phi}$ on $L_a^2(dA_{\alpha})$ defined by
 $$T_{\phi}f=P_{\alpha}(\phi f),\,\, f \in L_a^2(dA_{\alpha})$$
 is called the Toeplitz operator on $L_a^2(dA_{\alpha})$ with symbol $\phi.$ It is easy to observe that $T_{\phi}$ is a bounded linear operator on $L_a^2(dA_{\alpha})$ with $\|T_{\phi}\|\leq \|\phi\|_{\infty,\alpha}.$ Furthermore, if $\phi \in H^{\infty},$ then $T^*_{\phi}k^{\alpha}_w=\overline{\phi(w)}k^{\alpha}_w$ for all $w \in \mathbb{D},$ see \cite{FHS_CVEE_2015}.
 
Let $\phi : \mathbb{D} \to \mathbb{D}$ be an analytic self map on $\mathbb{D}$
 and $\psi \in H(\mathbb{D}).$ The weighted composition operator $C_{\psi,\phi } : H(\mathbb{D}) \to H(\mathbb{D})$ is defined by
 $$C_{\psi,\phi }f=\psi(f\circ \phi)~~\mbox{ for all $f \in H(\mathbb{D})$}.$$
 In particular, when $\phi$ is the identity mapping on $\mathbb{D}$ then $C_{\psi,\phi }$ becomes the multiplication operator $M_{\psi}$ and for $\psi=1,$ $C_{\psi,\phi }$ becomes the unweighted composition operator $C_{\phi}.$  In this article we limit our analysis of weighted composition operators on $L_a^2(dA_{\alpha}).$
 
In \cite[Th. 1]{CZ_IJM_2007}, Čučković et al. proved the following condition of boundedness of weighted composition operators on $L_a^2(dA_{\alpha}):$
 \begin{align}\label{E1B1}
    C_{\psi,\phi } \in \mathbb{B}(L_a^2(dA_{\alpha}))~~\mbox{if and only if}~~\sup_{a \in \mathbb{D}}I_{\phi, \alpha}(\psi)(a) <\infty,
 \end{align}
where 
$$I_{\phi, \alpha}(\psi)(a)=\int_{\mathbb{D}}\left(\frac{1-|a|^2}{|1-\bar{a}\phi(w)|^2}\right)^{\alpha+2}|\psi(w)|^2 dA_{\alpha}(w).$$
 Clearly, it follows from \eqref{E1B1} that if $\psi \in H^{\infty}$ then $C_{\psi,\phi } \in \mathbb{B}(L_a^2(dA_{\alpha})).$ Next relation follows from the reproducing property of $L_a^2(dA_{\alpha})$ that if $C_{\psi,\phi } \in \mathbb{B}(L_a^2(dA_{\alpha}))$ then 
$C^*_{\psi,\phi }k^{\alpha}_w=\overline{\psi(w)}k^{\alpha}_{\phi(w)}$ for all $w \in \mathbb{D}.$

\section{Shape of the numerical range}\label{S1}
We begin with the definition of $\alpha$-essential range.
\begin{definition}
For $\phi \in L^{\infty}(\mathbb{D}, dA_{\alpha})$ the $\alpha$-essential range of $\phi$ is denoted by  $R_{\phi, \alpha}$ and defined as 
$$R_{\phi, \alpha}=\{w : A_{\alpha}\left(\{z \in \mathbb{D} : |\phi(z)-w|<\epsilon\}\right)>0 ~~\mbox{for any $\epsilon >0$}\}.$$
\end{definition}
 It is easy to observe that $R_{\phi, \alpha}$ is a compact subset of $\mathbb{C}$ and 
$$\|\phi\|_{\infty, \alpha}=\max\{|w| : w \in R_{\phi, \alpha}\}.$$

Now, we prove the following lemma.

\begin{lemma}\label{L1}
	Let $\phi \in L^{\infty}(\mathbb{D}, dA_{\alpha})$ be such that $\phi$ is continuous on $\mathbb{D},$ then $\overline{\phi(\mathbb{D})}=R_{\phi, \alpha}.$
\end{lemma}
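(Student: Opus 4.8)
The plan is to prove the two set inclusions separately, and the one technical fact that makes everything work is that the weight $(1-|z|^2)^{\alpha}$ is strictly positive and continuous on $\mathbb{D}$, so that $A_{\alpha}$ and the normalized Lebesgue area measure $A$ are mutually absolutely continuous on $\mathbb{D}$. In particular, every nonempty open subset $U\subseteq\mathbb{D}$ satisfies $A_{\alpha}(U)>0$, because on a small closed ball contained in $U$ the weight is bounded below by a positive constant, whence $A_{\alpha}(U)\geq c\, A(U)>0$. I would record this observation at the outset, since it is the only place where the assumptions $\alpha>-1$ and the positivity of the weight are used.

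For the inclusion $\overline{\phi(\mathbb{D})}\subseteq R_{\phi,\alpha}$, fix $w\in\overline{\phi(\mathbb{D})}$ and $\epsilon>0$. By the definition of the closure there is a point $z_0\in\mathbb{D}$ with $|\phi(z_0)-w|<\epsilon/2$. Using continuity of $\phi$ at $z_0$, I would choose an open neighborhood $U\subseteq\mathbb{D}$ of $z_0$ on which $|\phi(z)-\phi(z_0)|<\epsilon/2$; the triangle inequality then gives $|\phi(z)-w|<\epsilon$ for every $z\in U$. Hence $\{z\in\mathbb{D}:|\phi(z)-w|<\epsilon\}\supseteq U$, and by the remark above $A_{\alpha}\!\left(\{z:|\phi(z)-w|<\epsilon\}\right)\geq A_{\alpha}(U)>0$. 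As $\epsilon>0$ was arbitrary, this shows $w\in R_{\phi,\alpha}$.

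For the reverse inclusion I would argue by contraposition. Suppose $w\notin\overline{\phi(\mathbb{D})}$. Since $\overline{\phi(\mathbb{D})}$ is closed, $\epsilon_0:=\operatorname{dist}\!\left(w,\overline{\phi(\mathbb{D})}\right)>0$, so $|\phi(z)-w|\geq\epsilon_0$ for every $z\in\mathbb{D}$. Therefore the set $\{z\in\mathbb{D}:|\phi(z)-w|<\epsilon_0\}$ is empty and has $A_{\alpha}$-measure zero, which shows that the defining condition of $R_{\phi,\alpha}$ fails at $\epsilon=\epsilon_0$; hence $w\notin R_{\phi,\alpha}$. Combining the two inclusions yields $\overline{\phi(\mathbb{D})}=R_{\phi,\alpha}$.

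There is no serious obstacle here: the argument is essentially the standard identification of the essential range of a continuous function with the closure of its image. The only point that genuinely requires care is the measure-theoretic step in the first inclusion — one must verify that the positivity of the continuous weight forces nonempty open sets to carry positive $A_{\alpha}$-measure, which is exactly where the hypotheses on $\alpha$ and the continuity of $\phi$ are needed.
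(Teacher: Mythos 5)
Your proof is correct and follows essentially the same route as the paper: both directions rest on the observation that the positive continuous weight $(\alpha+1)(1-|z|^2)^{\alpha}$ gives every nonempty open subset of $\mathbb{D}$ positive $A_{\alpha}$-measure, combined with continuity of $\phi$. The only cosmetic differences are that you absorb the closure into an $\epsilon/2$ argument where the paper first treats points of $\phi(\mathbb{D})$ and then invokes the closedness of $R_{\phi,\alpha}$, and you phrase the reverse inclusion contrapositively.
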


\begin{proof}
	If $u \in \phi(\mathbb{D})$ then there exists $z_0 \in \mathbb{D}$ such that $u=\phi(z_0).$ Since $\phi$ is continuous so for any $\epsilon>0$ there exists $\delta>0$ such that $|\phi(z)-u|<\epsilon$ whenever  $z \in S=\{z \in \mathbb{D} : |z-z_0|<\delta \}.$ Clearly, $A_{\alpha}(\{z \in \mathbb{D} : |\phi(z)-u|<\epsilon\}) \geq A_{\alpha}(S).$  If possible let $A_{\alpha}(S)=0.$ Then we have $\int_{S}(\alpha+1)(1-|z|^2)^{\alpha}dA=0.$ This implies that $(\alpha+1)(1-|z|^2)^{\alpha}=0$ almost all on $z \in S,$ which is not possible. Thus $A_{\alpha}(\{z \in \mathbb{D} : |\phi(z)-u|<\epsilon\}) \geq A_{\alpha}(S)>0.$ Therefore, $u \in R_{\phi, \alpha}$ and since $R_{\phi, \alpha}$ is closed so $\overline{\phi(\mathbb{D})}\subseteq R_{\phi, \alpha}.$

	Now, if $u \in R_{\phi, \alpha}$ then from the definition it follows that for any $\epsilon>0$ there exists $z \in \mathbb{D}$ such that $|\phi(z)-u|<\epsilon.$ Hence $u \in \overline{\phi(\mathbb{D})}$ and this completes the proof.
\end{proof}

\begin{remark}
	Here we note that if $\phi \in L^{\infty}(\mathbb{D}, dA_{\alpha})$ and $\phi$ is continuous on $\mathbb{D},$ then $\phi$ is bounded on $\mathbb{D}.$
\end{remark}




	





To prove our next result we need the following lemma which was proved in \cite{AFW_JFA_1993}.

\begin{lemma}\label{L2}
	If $\phi$ is harmonic and integrable over $\mathbb{D}$ then so is $\phi \circ \xi$ for every  Möbius transformation $\xi$ of $\mathbb{D},$ and $\int_{\mathbb{D}}(\phi \circ \xi) dA=\phi(\xi(0)).$
\end{lemma}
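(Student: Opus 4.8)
The plan is to deduce the integral formula from the classical area mean value property for harmonic functions, once we have verified that $\phi \circ \xi$ is itself harmonic and integrable. Writing $\psi = \phi \circ \xi$, the asserted identity $\int_{\mathbb{D}}\psi\, dA = \psi(0) = \phi(\xi(0))$ is exactly the statement that an integrable harmonic function on $\mathbb{D}$ has area-average equal to its value at the origin. Thus the lemma splits into three tasks: (i) $\phi \circ \xi$ is harmonic; (ii) $\phi \circ \xi$ is integrable over $\mathbb{D}$; and (iii) the area mean value identity holds for integrable harmonic functions.

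For (i), since $\mathbb{D}$ is simply connected I would write $\phi = f + \overline{g}$ with $f, g \in H(\mathbb{D})$. A Möbius transformation $\xi$ of $\mathbb{D}$ is a holomorphic automorphism, so $f\circ\xi$ and $g\circ\xi$ are again holomorphic and $\phi\circ\xi = f\circ\xi + \overline{g\circ\xi}$ is harmonic. Equivalently, the Wirtinger form of the chain rule gives $\Delta(\phi\circ\xi) = |\xi'|^2\,(\Delta\phi)\circ\xi = 0$.

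For (ii), I would perform the change of variables $w = \xi(z)$. Since $\xi$ is a biholomorphism of $\mathbb{D}$ onto $\mathbb{D}$ with real Jacobian $|\xi'(z)|^2$, writing $\eta = \xi^{-1}$ gives
$$\int_{\mathbb{D}}|\phi\circ\xi|\,dA = \int_{\mathbb{D}}|\phi(w)|\,|\eta'(w)|^2\,dA(w).$$
The key observation is that the automorphism derivative is uniformly bounded on $\mathbb{D}$: if $\eta(w) = \mu\,(b-w)/(1-\bar b w)$ with $|\mu|=1$ and $b \in \mathbb{D}$, then $|\eta'(w)| = (1-|b|^2)/|1-\bar b w|^2$, and because $|b|<1$ and $|w|\le 1$ we have $|1-\bar b w|\ge 1-|b|>0$, whence $|\eta'(w)|\le (1+|b|)/(1-|b|)$. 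Therefore the right-hand integral is at most a constant multiple of $\int_{\mathbb{D}}|\phi|\,dA <\infty$, so $\phi\circ\xi$ is integrable.

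For (iii), I would use that a harmonic function on $\mathbb{D}$ satisfies the circular mean value property $\frac{1}{2\pi}\int_0^{2\pi}\psi(re^{i\theta})\,d\theta = \psi(0)$ for every $0\le r<1$ (applied to real and imaginary parts), and then integrate in $r$ against the polar form of the normalized area measure, interchanging the order of integration by Fubini, which is legitimate precisely because $\psi$ is integrable. This yields $\int_{\mathbb{D}}\psi\,dA = \psi(0)$, and applying it to $\psi = \phi\circ\xi$ gives the claimed value $\phi(\xi(0))$. The main obstacle is step (iii): the mean value identity is usually stated for functions continuous on the closed disk, so the delicate point is to justify it for a merely integrable harmonic function, where integrability is exactly what makes the area integral well defined and permits the Fubini interchange; the bounded-Jacobian estimate in (ii) is the other point requiring care.
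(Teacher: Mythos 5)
Your argument is correct, but note that the paper does not actually prove this lemma: it is quoted verbatim from Ahern--Flores--Rudin \cite{AFW_JFA_1993} with only a citation, so there is no internal proof to compare against. Your self-contained route is sound on all three points. For (i), harmonicity of $\phi\circ\xi$ follows either from the Wirtinger chain rule $\Delta(\phi\circ\xi)=|\xi'|^2(\Delta\phi)\circ\xi$ or from the decomposition $\phi=f+\overline{g}$ on the simply connected disk. For (ii), your computation $|\eta'(w)|=(1-|b|^2)/|1-\bar b w|^2\le(1+|b|)/(1-|b|)$ is exactly right, and the uniform bound on the Jacobian does give $\phi\circ\xi\in L^1(\mathbb{D},dA)$; this is the one place where the special structure of disk automorphisms is genuinely used. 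For (iii), the point you flag as delicate is in fact harmless: the circular mean value property $\frac{1}{2\pi}\int_0^{2\pi}\psi(re^{i\theta})\,d\theta=\psi(0)$ needs no boundary regularity, since for each fixed $r<1$ the closed disk of radius $r$ is compactly contained in $\mathbb{D}$ and $\psi$ is smooth there; integrability of $\psi$ is needed only to make $\int_{\mathbb{D}}\psi\,dA$ finite and to license the Fubini interchange $\int_{\mathbb{D}}\psi\,dA=\frac{1}{\pi}\int_0^1\bigl(\int_0^{2\pi}\psi(re^{i\theta})\,d\theta\bigr)r\,dr=\psi(0)$ with the normalized measure. What your approach buys is a short, elementary, and fully self-contained verification of a fact the authors import as a black box; what the citation buys the authors is access to the more general invariant volume mean value framework of \cite{AFW_JFA_1993}, of which this lemma is a special case.
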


In the following proposition we obtain the spectrum of Toeplitz operator on $L_a^2(dA_{\alpha})$ with real and harmonic symbol, which generalizes the existing result on the Bergman space given in \cite[Prop. 12]{MS_IUMJ_1979}. 

\begin{proposition}\label{T1}
If $\phi \in L^{\infty}(\mathbb{D}, dA_{\alpha})$ is real and harmonic, then $\sigma(T_{\phi}) = [\inf\phi, \sup\phi].$
\end{proposition}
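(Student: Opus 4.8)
The plan is to exploit that a real symbol makes $T_\phi$ self-adjoint, which forces $\sigma(T_\phi)\subseteq\mathbb{R}$ and $\sigma(T_\phi)\subseteq\overline{W(T_\phi)}$, and then establish the two inclusions separately. For the first inclusion I would note that every $f\in L_a^2(dA_\alpha)$ satisfies $P_\alpha f=f$, so $\langle T_\phi f,f\rangle=\langle\phi f,f\rangle=\int_{\mathbb D}\phi\,|f|^2\,dA_\alpha$, whence $\inf\phi\le\langle T_\phi f,f\rangle/\|f\|^2\le\sup\phi$. This gives $W(T_\phi)\subseteq[\inf\phi,\sup\phi]$ and therefore $\sigma(T_\phi)\subseteq[\inf\phi,\sup\phi]$. (Since $\phi$ is harmonic, hence continuous, the essential and pointwise infima and suprema over $\mathbb D$ agree, as in Lemma \ref{L1}.)

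Second, to begin the reverse inclusion I would compute the Berezin transform of $T_\phi$. Using the involutive automorphism $\xi_w(z)=(w-z)/(1-\bar w z)$ and the change of variables $z=\xi_w(u)$, a direct computation yields the identity $|\hat{k}^{\alpha}_{w}(z)|^2\,dA_\alpha(z)=dA_\alpha(u)$, so that
\[
\langle T_\phi\hat{k}^{\alpha}_{w},\hat{k}^{\alpha}_{w}\rangle=\int_{\mathbb D}\phi\,|\hat{k}^{\alpha}_{w}|^2\,dA_\alpha=\int_{\mathbb D}(\phi\circ\xi_w)\,dA_\alpha.
\]
Because $dA_\alpha$ is a radial probability measure, the mean value property (the $dA_\alpha$-analogue of Lemma \ref{L2}, proved by averaging in polar coordinates) gives $\int_{\mathbb D}(\phi\circ\xi_w)\,dA_\alpha=(\phi\circ\xi_w)(0)=\phi(w)$. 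Hence $\phi(\mathbb D)\subseteq W(T_\phi)$; as $\phi$ is continuous on the connected set $\mathbb D$, the image $\phi(\mathbb D)$ is an interval with infimum $\inf\phi$ and supremum $\sup\phi$, so $\overline{W(T_\phi)}=[\inf\phi,\sup\phi]$ and, in particular, both endpoints belong to $\sigma(T_\phi)$.

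Third, the real content is to show that no interior point of the interval is missing. Fixing $\lambda\in(\inf\phi,\sup\phi)$, I would choose $w_0$ with $\phi(w_0)=\lambda$ and conjugate by the unitary weighted composition operator $U_{w_0}$ induced by $\xi_{w_0}$, which satisfies $U_{w_0}^*T_\phi U_{w_0}=T_{\phi\circ\xi_{w_0}}$. After subtracting $\lambda$ this reduces the claim to showing that $T_\psi$ is not invertible whenever $\psi$ is real, harmonic, $\psi(0)=0$, and $\psi\not\equiv0$; note that the minimum principle forces such a $\psi$ to change sign. By self-adjointness, non-invertibility is equivalent to the existence of approximate eigenvectors for the eigenvalue $0$, and the bound $\|T_\psi\hat{k}^{\alpha}_{w}\|^2\le\int_{\mathbb D}|\psi\circ\xi_w|^2\,dA_\alpha$ (obtained exactly as in the Berezin computation, since the mean of $\psi\circ\xi_w$ is $\psi(w)$) shows it suffices to locate centers $w$ on the nodal set $\{\psi=0\}$ where the localized oscillation $\int_{\mathbb D}|\psi\circ\xi_w|^2\,dA_\alpha$ is small.

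This last step is the main obstacle. If $0$ is attained as a nontangential boundary value of $\psi$ on a set of positive measure, then letting $w\to\zeta$ radially makes $\psi\circ\xi_w\to0$ and the oscillation vanishes, giving $0\in\sigma(T_\psi)$ at once. In general, however, a genuine interior value need not be any boundary value — already for $\psi$ the Poisson integral of a two-valued boundary function — and then the reproducing kernels do \emph{not} furnish approximate eigenvectors: the oscillation does not decay, even along approaches to the boundary oscillation points. I would therefore close this step in one of two ways: either by invoking connectedness of $\sigma(T_\phi)$, which together with $\sigma(T_\phi)\subseteq[\inf\phi,\sup\phi]$ and the presence of both endpoints immediately forces equality; or by an essential-spectrum argument, building the missing approximate eigenvectors from the local model of the harmonic symbol at its boundary discontinuities and thereby showing that this boundary behaviour fills in the whole interval. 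Establishing one of these two facts is where the difficulty concentrates; by comparison the first two steps are routine.
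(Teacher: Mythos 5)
Your first two steps are sound (the Berezin-transform computation showing $\langle T_\phi\hat k^\alpha_w,\hat k^\alpha_w\rangle=\phi(w)$ is a clean way to capture both endpoints, and is not how the paper argues), but the proof has a genuine gap exactly where you say the difficulty concentrates: you never establish that the interior points of $[\inf\phi,\sup\phi]$ lie in $\sigma(T_\phi)$. Neither of your two proposed escapes closes it. Connectedness of the spectrum of a Bergman--Toeplitz operator is not a general fact you may invoke: for bounded symbols it can fail, and for harmonic symbols it is essentially the content of the proposition itself, so appealing to it is circular. The essential-spectrum alternative is only a description of what one would like to do; you correctly observe that reproducing kernels do not furnish approximate eigenvectors for an interior value that is not a boundary value, but you do not produce the substitute.

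The paper closes this step by a different mechanism: it does not look for approximate eigenvectors at all, but shows that $T_\phi$ fails to be surjective. After reducing (much as you do) to a real harmonic symbol that changes sign, pick $w$ with $\phi(w)=0$ and suppose $k^\alpha_w=T_\phi f$. Writing $\phi f=k^\alpha_w+\psi$ with $\psi\perp L^2_a(dA_\alpha)$ and pairing with $fg$ for $g\in H^\infty$ gives $\int_{\mathbb D}\phi|f|^2\bar g\,dA_\alpha=\overline{g(w)f(w)}$. Taking $g=1$ shows $f(w)\in\mathbb R$; the weak$^*$-density of $\operatorname{Re}H^\infty$ in the bounded real harmonic functions then lets one pass from $\operatorname{Re}g_n$ to $\phi$ itself, while the invariant mean value property (Lemma \ref{L2}), applied through exactly the change of variables you already set up, identifies $\lim_n\operatorname{Re}g_n(w)$ as $\phi(w)$. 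The outcome is $\int_{\mathbb D}\phi^2|f|^2\,dA_\alpha=f(w)\phi(w)=0$, forcing $f\equiv0$ and contradicting $k^\alpha_w\neq0$. This range argument is the missing idea; without it (or an equivalent), your proof establishes only that $\overline{W(T_\phi)}=[\inf\phi,\sup\phi]$ and that the two endpoints belong to $\sigma(T_\phi)$.
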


\begin{proof}
	First we have to prove  $\sigma(T_{\phi})\subseteq [\inf\phi, \sup\phi]$ and so we have to show that $T_{\phi-\lambda}$ is invertible whenever $\lambda \notin [\inf\phi, \sup\phi].$ Since $\lambda \notin [\inf\phi, \sup\phi]$ so either $\phi(z)-\lambda>0$ or $\phi(z)-\lambda<0$ for all $z \in \mathbb{D}.$ For the first case we choose $\epsilon>0$ such that 
	$$\|\epsilon(\phi-\lambda)-1\|_{\infty, \alpha}\leq \sup_{z \in \mathbb{D}}|\epsilon(\phi(z)-\lambda)-1|<1$$
	and so we have 
	$$\|T_{\epsilon(\phi-\lambda)}-I\|=\|T_{\epsilon(\phi-\lambda)-1}\|\leq\|\epsilon(\phi-\lambda)-1\|_{\infty, \alpha}<1.$$
	Thus $T_{\epsilon(\phi-\lambda)}$ is invertible and so $T_{\phi-\lambda}$ is invertible.
	For the second case $\phi(z)-\lambda<0$ for all $z \in \mathbb{D}$ implies that $-(\phi(z)-\lambda)>0$ for all $z \in \mathbb{D}.$ Proceeding similarly as the first case and using the relation $T_{\phi-\lambda}=-T_{-\phi+\lambda},$ we get the desired result.
	
	Next we prove the opposite inclusion. Since $\sigma(T_{\phi})$ is a closed subset of $\mathbb{C}$ so it suffices to show that $\sigma(T_{\phi})\supseteq (\inf\phi, \sup\phi).$ 
	As $\sigma(T_{\phi-\lambda})\supseteq (\inf\phi-\lambda, \sup\phi-\lambda)$ for any $\lambda \in \mathbb{R}$, so we only have to prove that $T_{\phi}$ is not invertible whenever $\phi$ takes both positive and negative values on $\mathbb{D}.$ Since $\phi$ is continuous so there exists $w \in \mathbb{D}$ such that $\phi(w)=0.$ Now, to reach our main goal we only show that $k^{\alpha}_w \notin {Range(T_{\phi})}.$ 
	We suppose that $k^{\alpha}_w \in Range(T_{\phi}),$ then there exists $f \in L_a^2(dA_{\alpha})$ such that $T_{\phi}f=k^{\alpha}_w$ i.e., $P_{\alpha}(\phi f)=k^{\alpha}_w.$ Since $\phi \in L^{\infty}(\mathbb{D},dA_{\alpha})$ so $\phi f \in L^2(\mathbb{D}, dA_{\alpha})=L_a^2(dA_{\alpha})\oplus L_a^2(dA_{\alpha})^{\perp}$ and we have $\psi \in L_a^2(dA_{\alpha})^{\perp}$ such that
	\begin{align}\label{L3E1}
		\phi f=k^{\alpha}_w+\psi.
	\end{align}
	For any $g \in H^{\infty},$ from \eqref{L3E1} we have 
	\begin{align}\label{L3E2}
		\int_{\mathbb{D}}\phi |f|^2\bar{g}dA_{\alpha}=\int_{\mathbb{D}}k^{\alpha}_w\bar{ f}\bar{g}dA_{\alpha}+\int_{\mathbb{D}}\psi\bar{f}\bar{g} dA_{\alpha}.
	\end{align}
	Since $\psi \in L_a^2(dA_{\alpha})^{\perp}$ and $fg \in L_a^2(dA_{\alpha})$ so 
	\begin{align*}
		\int_{\mathbb{D}}\psi\bar{f}\bar{g} dA_{\alpha}=0.
	\end{align*}
	
	Now, we get
	\begin{align*}
		\int_{\mathbb{D}}k^{\alpha}_w\bar{ f}\bar{g}dA_{\alpha}=\langle k^{\alpha}_w, fg \rangle=
		\langle k^{\alpha}_w, T_{g}f \rangle=\langle T_{g}^*k^{\alpha}_w, f \rangle=\overline{g(w)}\langle k^{\alpha}_w, f \rangle=\overline{g(w)f(w)}.
	\end{align*}
	Therefore, from \eqref{L3E2} we have
	\begin{align}\label{L3E3}
		\int_{\mathbb{D}}\phi |f|^2\bar{g}dA_{\alpha}=\overline{g(w)f(w)}.
	\end{align}
	In particular considering $g=1,$ the equality \eqref{L3E3} implies that $f(w) \in \mathbb{R}.$ Thus for any $g \in H^{\infty}$ from \eqref{L3E3} we get
	\begin{align}\label{L3E4}
		\int_{\mathbb{D}}\phi |f|^2 Re gdA_{\alpha}=f(w)Re(g(w)).
	\end{align}
	Since $Re H^{\infty}$ is weak$^*$-dense in the bounded real harmonic functions, then there exists a sequence $\{g_n\}$ in $Re H^{\infty}$ such that
	\begin{align}\label{T1E1}
		\lim_{n \to \infty}\int_{\mathbb{D}}\psi Re g_ndA=\int_{\mathbb{D}}\psi \phi dA~~\mbox{for all $\psi \in L^1(\mathbb{D},dA).$}
	\end{align}
	As $\phi$ is bounded and $f \in L_a^2(dA_{\alpha})$ so we have $(\alpha+1)\phi |f|^2(1-|z|^2)^{\alpha} \in L^1(\mathbb{D},dA).$ Thus from \eqref{T1E1}, we get
	\begin{align*}
		\lim_{n \to \infty}\int_{\mathbb{D}}\phi |f|^2 Re g_ndA_{\alpha}=\int_{\mathbb{D}}\phi^2 |f|^2dA_{\alpha}.
	\end{align*}
	By applying \eqref{L3E4} we obtain that
	\begin{align}\label{T1E2}
		\lim_{n \to \infty}f(w)Re g_n(w)=\int_{\mathbb{D}}\phi^2 |f|^2dA_{\alpha}.
	\end{align}
	Let $\hat{k}_w$ be the normalized reproducing kernel of $L_a^2(dA)$ at the point $w.$ Then the function $|\hat{k}_w(z)|^2 \in L^1(\mathbb{D},dA)$ and from \eqref{T1E1}, we get
	\begin{align}\label{T1E3}
		\lim_{n \to \infty}\int_{\mathbb{D}}Re g_n |\hat{k}_w(z)|^2 dA=\int_{\mathbb{D}}\phi|\hat{k}_w(z)|^2dA.
	\end{align}
	Let $\phi_w$ be the M\"obius map on $\mathbb{D},$ given by $\phi_w(z)=\frac{w-z}{1-\bar{w}z}$ for all $z \in \mathbb{D}.$ Since the real Jacobian of $\phi_w$ is given by $|\hat{k}_w(z)|^2,$ we have
	\begin{align}\label{T1E4}
	  \lim_{n \to \infty}\int_{\mathbb{D}}Re g_n \circ \phi_w dA=\int_{\mathbb{D}}\phi\circ \phi_wdA.
	\end{align}
	Now, by Lemma \ref{L2} and \eqref{T1E4}, we obtain
	\begin{align}\label{T1E5}
		\lim_{n \to \infty}Re g_n(w)=\phi(w).
	\end{align}
	Therefore, combining \eqref{T1E2} and \eqref{T1E5} we get
	\begin{align*}
		\int_{\mathbb{D}}\phi^2 |f|^2dA_{\alpha}=f(w)\phi(w)=0.
	\end{align*}
	This implies that $\phi^2 |f|^2\equiv 0$ on $\mathbb{D}.$ As $\phi$ takes positive value on $\mathbb{D}$ and $f$ is analytic on $\mathbb{D}$ so we have $f=0.$ This implies that $T_{\phi}f=k^{\alpha}_w=0,$ which is a contradiction as $k^{\alpha}_w \neq 0.$ Thus, $k^{\alpha}_w \notin {Range(T_{\phi})}$ and this completes the proof.
	
\end{proof}

In the next result we completely determine the numerical range of Toeplitz operators acting on $L^2_a(dA_{\alpha})$ with harmonic symbol.

\begin{theorem}\label{T3}
	Let $\phi \in L^{\infty}(\mathbb{D}, dA_{\alpha})$ be a nonconstant and harmonic function on $\mathbb{D},$ then $W(T_{\phi})=$ $Rel$ $int$ $\overline{\phi(\mathbb{D})}^{\wedge}.$
\end{theorem}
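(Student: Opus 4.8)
The plan is to first identify the closure $\overline{W(T_{\phi})}$ with the compact convex set $C:=\overline{\phi(\mathbb{D})}^{\wedge}$ (compact since $\phi$ is bounded, so that $\overline{\phi(\mathbb{D})}$ is compact and its convex hull is too) by comparing support functions, and then to argue that $W(T_{\phi})$ meets none of the relative boundary of $C$, so that it collapses exactly onto $Rel~int~C$.

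For the first part I would fix a direction $e^{i\theta}$ and use that $T_{\phi}^{*}=T_{\bar{\phi}}$, so that the real part $\mathrm{Re}(e^{-i\theta}T_{\phi})=\tfrac12\bigl(e^{-i\theta}T_{\phi}+e^{i\theta}T_{\bar{\phi}}\bigr)$ is again a Toeplitz operator $T_{u_{\theta}}$ with symbol $u_{\theta}:=\mathrm{Re}(e^{-i\theta}\phi)$. This $u_{\theta}$ is real, bounded, and harmonic, so Proposition \ref{T1} gives $\sigma(T_{u_{\theta}})=[\inf u_{\theta},\sup u_{\theta}]$, and self-adjointness yields $\overline{W(T_{u_{\theta}})}=[\inf u_{\theta},\sup u_{\theta}]$. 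Since $\mathrm{Re}\bigl(e^{-i\theta}\langle T_{\phi}f,f\rangle\bigr)=\langle T_{u_{\theta}}f,f\rangle$, projecting $W(T_{\phi})$ onto the direction $e^{i\theta}$ produces precisely $W(T_{u_{\theta}})$, so the support function of $\overline{W(T_{\phi})}$ in direction $\theta$ equals $\sup W(T_{u_{\theta}})=\sup u_{\theta}$. The support function of $C$ in the same direction is $\sup_{w\in\phi(\mathbb{D})}\mathrm{Re}(e^{-i\theta}w)=\sup_{z}u_{\theta}(z)=\sup u_{\theta}$, where I use that $\phi$, being harmonic, is continuous so that its supremum agrees with its essential supremum. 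Two compact convex sets with identical support functions coincide, hence $\overline{W(T_{\phi})}=C$.

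Passing to the relative interior is the delicate part. Because the relative interior of a convex set depends only on its closure, the identity $\overline{W(T_{\phi})}=C$ already gives $Rel~int~C=Rel~int~W(T_{\phi})\subseteq W(T_{\phi})\subseteq C$, so it remains to show that no relative boundary point of $C$ lies in $W(T_{\phi})$. Given such a point $w_{0}$, I would pick a supporting direction $\theta$, that is $\mathrm{Re}(e^{-i\theta}w_{0})=\sup u_{\theta}$. If $w_{0}=\langle T_{\phi}f,f\rangle$ for a unit vector $f$, then $\langle T_{u_{\theta}}f,f\rangle=\mathrm{Re}(e^{-i\theta}w_{0})=\sup u_{\theta}$, i.e. $\int_{\mathbb{D}}(\sup u_{\theta}-u_{\theta})|f|^{2}\,dA_{\alpha}=0$; as the integrand is nonnegative and $f$ is analytic and therefore nonzero off a null set, this forces $u_{\theta}\equiv\sup u_{\theta}$, so $u_{\theta}$ is constant.

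Finally I would rule this out. If $u_{\theta}$ is constant then $\phi(\mathbb{D})$ lies in the line $\{\,w:\mathrm{Re}(e^{-i\theta}w)=\sup u_{\theta}\,\}$ and $C$ is a (possibly degenerate) segment. When $C$ is two-dimensional this is immediately a contradiction, so every boundary point is excluded; when $C$ is a genuine segment I would instead take $\theta$ pointing along the segment, for which $u_{\theta}$ is visibly non-constant, and run the same equality argument to exclude each endpoint, while the one-point case cannot occur since $\phi$ is nonconstant. In all cases $W(T_{\phi})$ is disjoint from the relative boundary of $C$, giving $W(T_{\phi})=Rel~int~C$. The main obstacle is exactly this boundary analysis: the support-function identification is routine, but promoting it to strict containment in the relative interior rests on the sharp non-attainment of $\sup u_{\theta}$ coming from Proposition \ref{T1}, which is precisely where the harmonicity of $\phi$ is indispensable, together with a careful treatment of the degenerate segment case.
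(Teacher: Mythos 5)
Your argument is correct, and it is organized rather differently from the paper's. The paper splits into two cases: for real harmonic symbols it combines Proposition \ref{T1} with a no-eigenvector argument (an attained extreme point of the numerical range would be an eigenvalue) to delete the endpoints of $[\inf\phi,\sup\phi]$; for complex symbols it gets $W(T_{\phi})\subseteq \overline{\phi(\mathbb{D})}^{\wedge}$ by dilating $T_{\phi}$ to the normal operator $M_{\phi}$, excludes the relative boundary by the attainment-forces-constancy argument, and then obtains the reverse inclusion by reducing to the real case. You instead identify $\overline{W(T_{\phi})}$ with $C=\overline{\phi(\mathbb{D})}^{\wedge}$ in one stroke by matching support functions in every direction, using $\mathrm{Re}(e^{-i\theta}T_{\phi})=T_{u_{\theta}}$ and Proposition \ref{T1} applied to the real harmonic symbol $u_{\theta}$; this replaces the dilation step and simultaneously yields both inclusions at the level of closures. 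You then invoke the convex-analysis fact that a convex set and its closure have the same relative interior to get $Rel~int~C\subseteq W(T_{\phi})$ immediately, which substitutes for the paper's reduction to Case 1 and for its eigenvector argument. Your boundary exclusion is essentially the paper's: attainment of the support value by a unit vector forces $\int(\sup u_{\theta}-u_{\theta})|f|^{2}\,dA_{\alpha}=0$, hence $u_{\theta}$ constant by analyticity of $f$, and your explicit handling of the degenerate segment case (taking $\theta$ along the segment to rule out the two endpoints) is actually cleaner than the paper's somewhat terse ``repeat with the imaginary part'' remark. The trade-off is that your route leans on the standard identity $Rel~int~A=Rel~int~\overline{A}$ for convex sets, which you should cite or prove, while the paper's route is more self-contained but longer and requires the extreme-point/eigenvalue fact. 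Both proofs ultimately rest on the same two pillars: Proposition \ref{T1} for real harmonic symbols and the positivity argument showing the support value is never attained.
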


\begin{proof}
	We prove this theorem with the following two cases.\\
	\textbf{Case 1} First we prove this result when $\phi \in L^{\infty}(\mathbb{D}, dA_{\alpha})$ is a nonconstant, real-valued and harmonic function on $\mathbb{D}.$ Since $\phi$ is real-valued function so $T_{\phi}$ is self adjoint and by Proposition \ref{T1} we get that $\overline{W(T_{\phi})}=\sigma(T_{\phi})^{\wedge}=[\inf \phi, \sup \phi].$ As $W(T_{\phi})$ is convex so $(\inf \phi, \sup \phi) \subseteq W(T_{\phi}).$ Now, we will show that $T_{\phi}$ has no eigenvector. As $T_{\phi-\lambda}=T_{\phi}-\lambda I$ for all $\lambda \in \mathbb{C}$ so we have to show for any $f \in L_a^2(dA_{\alpha}),$ $T_{\phi}f=0$ implies that $f=0.$ If $T_{\phi}f=0$ then $\phi f \in L_a^2(dA_{\alpha})^{\perp}.$ For any $g \in H^{\infty}$ we have $fg \in  L_a^2(dA_{\alpha})$. Thus we get
	\begin{align*}
		\int_{\mathbb{D}}\phi|f|^2\bar{g}dA_{\alpha}=\langle \phi f,fg \rangle=0.
	\end{align*}
	Hence, we obtain
	\begin{align*}
		\int_{\mathbb{D}}\phi|f|^2 Re gdA_{\alpha}=\int_{\mathbb{D}}\phi|f|^2Re\bar{g}dA_{\alpha}=0.
	\end{align*}
	Now, proceeding similarly as Proposition \ref{T1} we get
	\begin{align*}
		\int_{\mathbb{D}}\phi^2|f|^2dA_{\alpha}=0.
	\end{align*}
	This implies that $f=0.$
	Thus $\inf \phi, \sup \phi \notin W(T_{\phi})$ because if either $\inf \phi$ or  $\sup \phi$ are in $W(T_{\phi})$ then they are extreme points of $T_{\phi}$ and hence they are eigenvalues of $T_{\phi}.$ This completes the proof for the first case.\\
\textbf{Case 2} Now, we prove this result when $\phi \in L^{\infty}(\mathbb{D}, dA_{\alpha})$ is a nonconstant, complex-valued and harmonic function on $\mathbb{D}.$ We first prove the inclusion $W(T_{\phi})\subseteq $ $Rel$ $int$ $\overline{\phi(\mathbb{D})}^{\wedge}.$
Let $M_{\phi}$ be the multiplication operator on $L^2(\mathbb{D},dA_{\alpha}).$ Since $M_{\phi}$ is a normal operator and $\sigma(M_{\phi})=R_{\phi,\alpha}$ (see \cite[Prob. 67]{H_BOOK}), so 
$\overline{W(M_{\phi})}=R^{\wedge}_{\phi,\alpha}.$ 
Therefore, from Lemma \ref{L1} we get 
\begin{align}\label{T3E1}
	\overline{W(M_{\phi})}=\overline{\phi(\mathbb{D})}^{\wedge}.
\end{align}
As $T_{\phi}$ dilates to $M_{\phi}$ so
\begin{align}\label{T3E2}
	W(T_{\phi}) \subseteq \overline{W(M_{\phi})}.
\end{align}
Now, combining \eqref{T3E1} and \eqref{T3E2} we obtain 
\begin{align}\label{T3E3}
	W(T_{\phi}) \subseteq \overline{\phi(\mathbb{D})}^{\wedge}.
\end{align}
Suppose that $W(T_{\phi})$  is not contained in  $Rel$ $int$ $\overline{\phi(\mathbb{D})}^{\wedge}.$ Then there exists $\theta \in \mathbb{R},$ $\gamma \in \mathbb{C}$ and $f \in L_a^2(dA_{\alpha})$ with $\|f\|=1$ such that 
$$\langle T_{Re(e^{i \theta}(\phi +\gamma))}f,f \rangle=\max \overline{Re(e^{i \theta}(\phi +\gamma))(\mathbb{D})}^{\wedge}=\delta.$$
This implies that 
$$\langle M_{Re(e^{i \theta}(\phi +\gamma))}f,f \rangle=\langle {Re(e^{i \theta}(\phi +\gamma))}f,f \rangle=\delta.$$
Since $M_{Re(e^{i \theta}(\phi +\gamma))} \leq \delta I$ so we get $Re(e^{i \theta}(\phi +\gamma))f=cf.$ The analyticity of non zero $f$ implies that $Re(e^{i \theta}(\phi +\gamma)(z))=\delta$ for all $z \in \mathbb{D}.$ Therefore, $\overline{(\phi +\gamma)(\mathbb{D})}^{\wedge}$ is contained in a line. Repeating the above process with $Im(e^{i \theta}(\phi +\gamma))$ yields that $\phi$ is constant, which contradicts our assumption. Therefore, $W(T_{\phi})$ is contained in the relative interior of $\overline{\phi(\mathbb{D})}^{\wedge}.$

If possible let they are not equal then there exists $\theta \in \mathbb{R}$ and $c \in \mathbb{C}$ such that $W(T_{\psi})\subsetneqq$ $Rel$ $int$ $\overline{\psi(\mathbb{D})}^{\wedge},$ where $\psi=Re(e^{i \theta}(\phi+c))$ which is real and harmonic and this contradicts {Case 1}. Thus we obtain the desired relation.
\end{proof}

The following corollary follows from Theorem \ref{T3}.
\begin{cor}\label{C1}
	If $\phi \in H^{\infty}$ then $W(M_{\phi})=\phi(\mathbb{D})^{\wedge}.$
\end{cor}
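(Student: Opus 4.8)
The plan is to deduce the corollary from Theorem~\ref{T3} by reducing the multiplication operator $M_\phi$ to a Toeplitz operator and then simplifying the right-hand side using analyticity. First I observe that when $\phi \in H^\infty$, multiplication by $\phi$ preserves $L_a^2(dA_\alpha)$, since the product of bounded analytic functions is analytic and the boundedness condition guarantees $M_\phi$ is a well-defined bounded operator on the Bergman space. Consequently $P_\alpha(\phi f) = \phi f$ for every $f \in L_a^2(dA_\alpha)$, which means precisely that $T_\phi = M_\phi$ as operators on $L_a^2(dA_\alpha)$. Thus $W(M_\phi) = W(T_\phi)$, and I can feed this into Theorem~\ref{T3}.

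Next I would apply Theorem~\ref{T3}: since any $\phi \in H^\infty$ is in particular harmonic (being analytic) and lies in $L^\infty(\mathbb{D}, dA_\alpha)$, the theorem gives $W(T_\phi) = \mathrm{Rel}\ \mathrm{int}\ \overline{\phi(\mathbb{D})}^{\wedge}$ whenever $\phi$ is nonconstant. The remaining work is to show that for \emph{analytic} $\phi$ this relative interior coincides with $\phi(\mathbb{D})^{\wedge}$, the convex hull of the (non-closed) image. The key point is the open mapping theorem: a nonconstant analytic function maps the open disk $\mathbb{D}$ to an open set $\phi(\mathbb{D})$, so $\phi(\mathbb{D})$ is open. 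Taking convex hulls, $\phi(\mathbb{D})^{\wedge}$ is an open convex set, hence it equals its own relative interior and is contained in the relative interior of its closure. Combining this with the general identity $\mathrm{Rel}\ \mathrm{int}\ \overline{\phi(\mathbb{D})}^{\wedge} = \mathrm{Rel}\ \mathrm{int}\ (\phi(\mathbb{D})^{\wedge})^{-} = \mathrm{Rel}\ \mathrm{int}\ \phi(\mathbb{D})^{\wedge} = \phi(\mathbb{D})^{\wedge}$ should give the equality $W(M_\phi) = \phi(\mathbb{D})^\wedge$.

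The main obstacle I anticipate is the careful handling of the two degenerate situations. The first is the constant case: if $\phi \equiv c$, then $M_\phi = cI$, so $W(M_\phi) = \{c\} = \phi(\mathbb{D})^\wedge$ trivially, but Theorem~\ref{T3} does not apply since it assumes $\phi$ nonconstant; I would dispose of this separately. The second, more delicate issue is when $\phi(\mathbb{D})$ lies in a line (for instance $\phi$ with one-dimensional image), so that the convex hull is a line segment with empty topological interior. Here ``relative interior'' is essential, and I must verify that the relative interior of the closed segment $\overline{\phi(\mathbb{D})}^\wedge$ equals the open segment $\phi(\mathbb{D})^\wedge$; this again reduces to the open mapping theorem applied in the appropriate ambient dimension, guaranteeing $\phi(\mathbb{D})$ is open relative to the line it spans and that passing to the closure only adds the endpoints. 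Once these boundary-versus-interior bookkeeping points are settled, the identification $\mathrm{Rel}\ \mathrm{int}\ \overline{\phi(\mathbb{D})}^\wedge = \phi(\mathbb{D})^\wedge$ follows, completing the proof.
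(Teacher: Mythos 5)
Your argument is correct and is essentially the route the paper intends (the paper gives no written proof, only the remark that the corollary follows from Theorem~\ref{T3}): identify $M_\phi$ with $T_\phi$ for $\phi\in H^\infty$, apply the theorem in the nonconstant case, treat constants trivially, and use the open mapping theorem to get $\mathrm{Rel}\ \mathrm{int}\ \overline{\phi(\mathbb{D})}^{\wedge}=\phi(\mathbb{D})^{\wedge}$. The only superfluous point is your second ``degenerate situation'': for a nonconstant \emph{analytic} $\phi$ the open mapping theorem already makes $\phi(\mathbb{D})$ open in $\mathbb{C}$, so its convex hull has nonempty interior and can never lie in a line --- that case is vacuous here (it matters only for general harmonic symbols in Theorem~\ref{T3}).
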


The next example demonstrates that the condition $\phi$ is harmonic in $\mathbb{D}$ is necessary in Theorem \ref{T3}.

\begin{example}
	If we consider the function $\phi(z)=|z|^2$ on $\mathbb{D}$ then $\phi$ is continuous but not harmonic in $\mathbb{D}.$ For any $n,m\geq 0,$ we have
	\begin{align*}
	&\langle T_{\phi}e_n,e_m
	\rangle \\
	&=\frac{\sqrt{\Gamma(n+\alpha+2)\Gamma(m+\alpha+2)}}{\sqrt{n!m!}\Gamma(\alpha+2)}(\alpha+1)\int_{\mathbb{D}}\phi(z)z^n\bar{z}^mdA_{\alpha}(z)\\
	&=\frac{\sqrt{\Gamma(n+\alpha+2)\Gamma(m+\alpha+2)}}{\sqrt{n!m!}\Gamma(\alpha+1)}\left(\int_{r=0}^1 r^{n+m+3}(1-r^2)^{\alpha}dr \right)\left(\int_{\theta=0}^{2\pi} e^{i(n-m)\theta}d\theta \right)\\
	&=\begin{cases}
		\lambda_n\,\,\,\,\mbox{if $n=m$}\\
		0\,\,\,\,\mbox{ if $n \neq m$}
	\end{cases},
	\end{align*}
where 
\begin{align*}
	\lambda_n =\frac{\pi\Gamma(n+\alpha+2)}{n!\Gamma(\alpha+1)}\int_{r=0}^1 r^{n+1}(1-r)^{\alpha}dr= \frac{\pi(n+1)}{n+\alpha+2}.
\end{align*}
Clearly, $\{\lambda_n\}_{n=0}^\infty$ is a increasing sequence with $\lambda_n \to \pi.$ Thus the matrix representation of $T_{\phi}$ is a diagonal matrix with diagonal elements $\lambda_n,$ relative to the standard ordered basis $\{e_n\}_{n=0}^\infty$ of $L_a^2(dA_{\alpha}).$ Hence $W(T_{\phi})=\left[\frac{\pi}{\alpha+2},\pi\right).$ Again  $Rel$ $int$ $\overline{\phi(\mathbb{D})}^{\wedge}=(0,1).$ Hence for this example $W(T_{\phi})$ is not equal to $Rel$ $int$ $\overline{\phi(\mathbb{D})}^{\wedge}$ furthermore $W(T_{\phi})$ is not a relatively open subset of $\mathbb{C}.$ 
\end{example}

We now introduce the following definition.

\begin{definition}
	For $n \geq 2$ and $n>j\geq 0,$ we define the set $L_j$ of $L_a^2(dA_{\alpha})$ as
	$$\\L_j=\left\{ f \in L_a^2(dA_{\alpha}) : f(z)=z^jg(z^n), g \in L_a^2(dA_{\alpha}) \right\}.$$
\end{definition}

Now, we prove the following lemma which will be useful to prove the next result.
\begin{lemma}\label{L11}
	If $m \in \mathbb{N}$ and $c>1$ then the sequence $\left\{x_n=\frac{n!\Gamma(nm+c)}{(nm)!\Gamma(n+c)}\right\}$ is bounded.
\end{lemma}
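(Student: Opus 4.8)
The plan is to prove the stronger statement that $\{x_n\}$ actually \emph{converges} to the finite constant $m^{c-1}$, which a fortiori yields boundedness. The first step is purely bookkeeping: rewrite the factorials as Gamma values, $n! = \Gamma(n+1)$ and $(nm)! = \Gamma(nm+1)$, and regroup the four Gamma factors into two ratios in each of which the two arguments differ by the \emph{fixed} amount $c-1$:
\[
x_n = \frac{\Gamma(n+1)}{\Gamma(n+c)}\cdot\frac{\Gamma(nm+c)}{\Gamma(nm+1)}.
\]
This grouping is the crucial move, because it isolates exactly the quantities whose growth is controlled by a single standard estimate. (Here the hypothesis $c>1$ only serves to guarantee that all the arguments are positive, so that every $x_n$ is a well-defined positive real; it plays no role in the limit itself.)

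Second, I would invoke the classical asymptotic for ratios of Gamma functions, namely $\frac{\Gamma(x+a)}{\Gamma(x+b)} = x^{a-b}\bigl(1+O(1/x)\bigr)$ as $x\to\infty$, which follows directly from Stirling's formula. Applying it with $x=n$, $a=1$, $b=c$ to the first factor gives $\frac{\Gamma(n+1)}{\Gamma(n+c)} = n^{1-c}\bigl(1+o(1)\bigr)$, and with $x=nm$, $a=c$, $b=1$ to the second gives $\frac{\Gamma(nm+c)}{\Gamma(nm+1)} = (nm)^{c-1}\bigl(1+o(1)\bigr)$. Multiplying these and using the exact cancellation of exponents $n^{1-c}(nm)^{c-1} = m^{c-1}$ yields
\[
x_n = m^{c-1}\bigl(1+o(1)\bigr)\longrightarrow m^{c-1}.
\]
Since every convergent sequence is bounded, this completes the argument, and in fact shows that $\{x_n\}$ is bounded both above and away from $0$. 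A quick sanity check (e.g.\ $m=2$, $c=2$ gives $x_n=\tfrac{2n+1}{n+1}\to 2=m^{c-1}$, and $m=2$, $c=3$ gives $x_n=\tfrac{2(2n+1)}{n+2}\to 4=m^{c-1}$) confirms the constant.

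The only genuine obstacle I foresee is cosmetic: making the Gamma-ratio asymptotic self-contained if one wishes to avoid quoting Stirling. There is a clean elementary alternative via the consecutive ratio. A direct computation using $\Gamma(x+1)=x\Gamma(x)$ gives
\[
\frac{x_{n+1}}{x_n} = \frac{n+1}{n+c}\prod_{j=0}^{m-1}\frac{nm+c+j}{nm+1+j},
\]
and each of the $m+1$ factors tends to $1$. The key observation is that the leading $1/n$ contributions balance exactly — the single factor $\tfrac{n+1}{n+c}$ contributes $\tfrac{1-c}{n}+O(1/n^2)$ to $\log(x_{n+1}/x_n)$, while the product contributes $\tfrac{c-1}{n}+O(1/n^2)$ — so that $\log x_{n+1}-\log x_n = O(1/n^2)$ is summable. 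Hence $\log x_n$ converges to a finite limit, and so does $x_n$, giving boundedness without any appeal to Stirling. I would present the Stirling route as the main line and keep this telescoping argument in reserve, since the essential content of both is the same exponent cancellation that produces the finite limit $m^{c-1}$.
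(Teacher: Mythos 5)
Your proof is correct, and it establishes more than the lemma asks for: the exact limit $x_n\to m^{c-1}$, hence boundedness both above and away from zero. The route is genuinely different from the paper's. The paper first disposes of the case where $c$ is an integer, in which $x_n$ telescopes into the finite product $\prod_{j=1}^{c-1}\frac{nm+j}{n+j}$, each factor of which is at most $m$; it then observes that $x\mapsto\frac{\Gamma(nm+x)}{\Gamma(n+x)}$ is increasing on $[1,\infty)$ (since the digamma function is increasing and $nm\geq n$), so that for general $c>1$ one may majorize $x_n$ by the corresponding sequence with $c$ replaced by the next integer, reducing to the case already handled. That argument is shorter and avoids asymptotics entirely, but it only yields an upper bound (roughly $m^{\lceil c\rceil-1}$) and genuinely uses $c>1$ through the monotonicity step. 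Your regrouping $x_n=\frac{\Gamma(n+1)}{\Gamma(n+c)}\cdot\frac{\Gamma(nm+c)}{\Gamma(nm+1)}$ combined with $\frac{\Gamma(x+a)}{\Gamma(x+b)}\sim x^{a-b}$ costs an appeal to Stirling (or to your telescoping fallback, which is also correct: the $\frac{1-c}{n}$ and $\frac{c-1}{n}$ contributions to $\log(x_{n+1}/x_n)$ cancel, leaving a summable $O(1/n^2)$ tail), but it buys the sharp constant and, as you note, shows the restriction $c>1$ is inessential to boundedness. Either proof is adequate for the application in the paper, which only needs an upper bound.
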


\begin{proof}
	If $c$ is an integer then it easily follows that $\{x_n\}$ is bounded. Now, the function $f(x)=\frac{\Gamma(mn+x)}{\Gamma(n+x)}$ is increasing on the interval $[1,\infty).$ So, for an arbitrary $c>1$ by choosing an integer greater than $c$ and using the boundedness of the sequence for the integer case the desired result follows.
\end{proof}

Next we prove the following decomposition of $L_a^2(dA_{\alpha}),$ which will be an essential tool to compute the numerical range of weighted composition operators on $L_a^2(dA_{\alpha}).$

\begin{proposition}
	For each $n \geq 2,$ $L_a^2(dA_{\alpha})$ can be decomposed as
	$$L_a^2(dA_{\alpha})=L_0 \oplus L_1 \oplus \ldots \oplus L_{n-1}.$$
\end{proposition}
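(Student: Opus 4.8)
The plan is to use the power-series representation of elements of $L_a^2(dA_{\alpha})$ and to sort the monomials by the residue of their exponent modulo $n$. Given $f \in L_a^2(dA_{\alpha})$ with $f(z)=\sum_{k=0}^{\infty}\hat{f}_kz^k$, for each $0\le j\le n-1$ I would collect the terms whose exponent is $\equiv j \pmod n$ and write
\begin{align*}
f(z)=\sum_{j=0}^{n-1} z^j\Big(\sum_{\ell=0}^{\infty}\hat{f}_{\ell n+j}(z^n)^{\ell}\Big)=\sum_{j=0}^{n-1}z^jg_j(z^n),
\end{align*}
where $g_j(w)=\sum_{\ell=0}^{\infty}\hat{f}_{\ell n+j}w^{\ell}$. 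Since the regrouping involves only finitely many blocks, this identity holds as an identity of analytic functions and, once each $g_j$ is shown to lie in $L_a^2(dA_{\alpha})$, it displays $f$ as a sum of elements $f_j:=z^jg_j(z^n)\in L_j$.

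The orthogonality of the subspaces is the easy ingredient. Because distinct monomials are mutually orthogonal in $L_a^2(dA_{\alpha})$ (the off-diagonal inner products vanish, the angular integral $\int_0^{2\pi}e^{i(p-q)\theta}\,d\theta$ being zero for $p\ne q$), any function in $L_i$ involves only exponents $\equiv i$ and any function in $L_j$ only exponents $\equiv j \pmod n$; for $i\ne j$ with $0\le i,j\le n-1$ these residue classes never meet, so $L_i\perp L_j$.

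The main obstacle is verifying $g_j\in L_a^2(dA_{\alpha})$, i.e. the norm comparison. Using $\|z^k\|^2=\tfrac{k!\,\Gamma(\alpha+2)}{\Gamma(k+\alpha+2)}$ I would write
\begin{align*}
\|g_j\|^2=\sum_{\ell=0}^{\infty}\frac{\ell!\,\Gamma(\alpha+2)}{\Gamma(\ell+\alpha+2)}|\hat{f}_{\ell n+j}|^2=\sum_{\ell=0}^{\infty}\rho_{\ell}\cdot\frac{(\ell n+j)!\,\Gamma(\alpha+2)}{\Gamma(\ell n+j+\alpha+2)}|\hat{f}_{\ell n+j}|^2,
\end{align*}
where $\rho_{\ell}=\frac{\ell!\,\Gamma(\ell n+j+\alpha+2)}{(\ell n+j)!\,\Gamma(\ell+\alpha+2)}$. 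The last sum with $\rho_\ell$ omitted is bounded by $\|f\|^2<\infty$, so it suffices to bound $\rho_{\ell}$ uniformly in $\ell$. Applying Lemma \ref{L11} with $m=n$ and $c=\alpha+2>1$ (valid since $\alpha>-1$) gives the boundedness of $\frac{\ell!\,\Gamma(\ell n+\alpha+2)}{(\ell n)!\,\Gamma(\ell+\alpha+2)}$, and the remaining factor $\frac{\Gamma(\ell n+j+\alpha+2)\,(\ell n)!}{\Gamma(\ell n+\alpha+2)\,(\ell n+j)!}$ is a ratio of two polynomials in $\ell$ of the same degree $j$ with equal leading coefficients, hence bounded. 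Thus $\rho_{\ell}\le C$ and $\|g_j\|^2\le C\|f\|^2<\infty$, so $g_j\in L_a^2(dA_{\alpha})$ and $f_j\in L_j$.

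Combining the three ingredients, every $f\in L_a^2(dA_{\alpha})$ is a sum of elements of the mutually orthogonal subspaces $L_0,\dots,L_{n-1}$, so $L_a^2(dA_{\alpha})=L_0+\cdots+L_{n-1}$ with the sum orthogonal. This in turn forces each $L_j=\big(\sum_{i\ne j}L_i\big)^{\perp}$ to be closed, and therefore the decomposition is a genuine orthogonal direct sum $L_a^2(dA_{\alpha})=L_0\oplus L_1\oplus\cdots\oplus L_{n-1}$.
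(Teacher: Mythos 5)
Your proof is correct and follows essentially the same route as the paper: identify $L_j$ with the functions whose Taylor coefficients are supported on the residue class $j \pmod n$ via Lemma \ref{L11}, then use orthogonality of distinct monomials to get the orthogonal decomposition and closedness. You simply supply the details the paper leaves implicit, including the (correct) observation that the shift by $j$ contributes only an extra bounded polynomial ratio beyond what Lemma \ref{L11} directly gives.
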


\begin{proof}
	By applying Lemma \ref{L11} it follows that each element of $L_a^2(dA_{\alpha})$ of the form $\sum_{k=0}^{\infty}a_kz^{kn+j}$ lies in $L_j$ and conversely. Then it follows easily that each $L_j$ is a closed subspace and for all $n \geq 2,$
	$$L_a^2(dA_{\alpha})=L_0 \oplus L_1 \oplus \ldots \oplus L_{n-1}.$$ 
\end{proof}

Our next result read as:

\begin{lemma}\label{Lem1}
	If $\psi \in H^{\infty}$ and $M_{\psi} (L_j) \subseteq L_j$ then $W(M_{\psi}|_{L_j})=\phi(\mathbb{D})^{\wedge}.$
\end{lemma}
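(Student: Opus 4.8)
The plan is to prove the two inclusions separately: the inclusion $W(M_\psi|_{L_j})\subseteq \psi(\mathbb D)^\wedge$ is cheap and comes from the invariance hypothesis, while the reverse inclusion is the substance and rests on a reproducing-kernel computation carried out \emph{inside} the subspace $L_j$. (Here the right-hand side should read $\psi(\mathbb D)^\wedge$, since the only operator in play is $M_\psi$.)

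First I would dispose of the easy inclusion. Because $M_\psi(L_j)\subseteq L_j$, for every unit vector $f\in L_j$ we have $\langle M_\psi|_{L_j}f,f\rangle=\langle M_\psi f,f\rangle$ with $f$ a unit vector of the full space, so $W(M_\psi|_{L_j})\subseteq W(M_\psi)$. Applying Corollary \ref{C1} with symbol $\psi$ gives $W(M_\psi)=\psi(\mathbb D)^\wedge$, hence $W(M_\psi|_{L_j})\subseteq \psi(\mathbb D)^\wedge$.

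For the reverse inclusion I would exploit that $L_j$, being a closed subspace of the reproducing kernel Hilbert space $L_a^2(dA_\alpha)$ with orthonormal basis $\{e_{kn+j}\}_{k\ge 0}$, is itself a reproducing kernel Hilbert space, whose kernel at $w$ is $k^{L_j}_w=P_j k^\alpha_w$ with $P_j$ the orthogonal projection onto $L_j$; indeed $\langle f,P_j k^\alpha_w\rangle=\langle f,k^\alpha_w\rangle=f(w)$ for $f\in L_j$. The key point is that, precisely because $L_j$ is $M_\psi$-invariant, $M_\psi f\in L_j$ can be reproduced by $k^{L_j}_w$, and the usual computation yields the adjoint-eigenvector relation $(M_\psi|_{L_j})^*k^{L_j}_w=\overline{\psi(w)}\,k^{L_j}_w$. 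Whenever $k^{L_j}_w\neq 0$, normalizing gives $\langle M_\psi|_{L_j}\hat k^{L_j}_w,\hat k^{L_j}_w\rangle=\psi(w)$, so $\psi(w)\in W(M_\psi|_{L_j})$. A direct norm computation $\|k^{L_j}_w\|^2=\sum_{k\ge 0}|e_{kn+j}(w)|^2$ shows $k^{L_j}_w\neq 0$ exactly for $w\in\mathbb D\setminus\{0\}$ (and also at $w=0$ when $j=0$), whence $\psi(\mathbb D\setminus\{0\})\subseteq W(M_\psi|_{L_j})$.

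To finish I would use convexity of $W(M_\psi|_{L_j})$ to pass to the convex hull and then show $\psi(\mathbb D\setminus\{0\})^\wedge=\psi(\mathbb D)^\wedge$, the only missing value being $\psi(0)$. For nonconstant $\psi$ the open mapping theorem gives a disk $D(\psi(0),r)\subseteq\psi(\mathbb D)$; every point of $D(\psi(0),r)$ other than $\psi(0)$ has a preimage distinct from $0$ (as it is not the value $\psi(0)$), so it lies in $\psi(\mathbb D\setminus\{0\})$, and therefore $\psi(0)$ lies in the convex hull of $\psi(\mathbb D\setminus\{0\})$. This forces $\psi(\mathbb D\setminus\{0\})^\wedge=\psi(\mathbb D)^\wedge$, and combined with the first inclusion yields equality; the constant case is trivial. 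The subtle step—and the one I expect to be the main obstacle—is exactly this recovery of $\psi(0)$: the subspace kernel $k^{L_j}_w$ degenerates at the origin for $j\ge 1$, so the naive "test against all reproducing kernels'' argument misses a single point, which must be reinstated through the convexity-plus-open-mapping argument; the other point demanding care is checking that the adjoint-eigenvector relation genuinely uses $M_\psi(L_j)\subseteq L_j$.
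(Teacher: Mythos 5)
Your proposal is correct and follows essentially the same route as the paper: the upper inclusion via Corollary \ref{C1} and $W(M_{\psi}|_{L_j})\subseteq W(M_{\psi})$, the lower inclusion by testing against the projected kernels $p_jk^{\alpha}_w$ for $w\neq 0$ (where the invariance hypothesis makes the reproducing computation go through), and the recovery of $\psi(0)$ by the open mapping theorem. Your version merely spells out the details the paper compresses — the adjoint-eigenvector relation, the nonvanishing of $p_jk^{\alpha}_w$ off the origin, and the convexity step behind "$\psi(0)\in W(M_{\psi}|_{L_j})$ by open mapping theorem" — and you are right that the right-hand side of the statement should read $\psi(\mathbb{D})^{\wedge}$.
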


\begin{proof}
Let $p_j$ be the orthogonal projection from $L_a^2(dA_{\alpha})$ onto $L_j.$ Now, for $w \in \mathbb{D}\setminus\{0\}$ we denote $k^{\alpha}_{w,j}=p_jk^{\alpha}_w.$ Then we have 
	\begin{align*}
		\langle M_{\psi}\hat{k}^{\alpha}_{w,j},\hat{k}^{\alpha}_{w,j} \rangle =\frac{1}{\|k^{\alpha}_{w,j}\|^2}\langle \psi k^{\alpha}_{w,j},k^{\alpha}_{w,j}  \rangle=\frac{1}{\|k^{\alpha}_{w,j}\|^2} \psi(w) k^{\alpha}_{w,j}(w)=\psi(w).
	\end{align*}
Thus for any $w \in \mathbb{D}\setminus\{0\},$ $\psi(w) \in W(M_{\psi}|_{L_j})$ and $\psi(0) \in W(M_{\psi}|_{L_j})$ by open mapping theorem. Thus we get 
$W(M_{\psi}|_{L_j})  \supseteq \phi(\mathbb{D})^{\wedge}.$ Now, the desired result follows from the Corollary \ref{C1} and $W(M_{\psi}|_{L_j}) \subseteq W(M_{\psi}).$
\end{proof}

Now, we are in a position to prove the following result.

\begin{theorem}\label{Th1}
	Let $\phi(z)=\lambda z$ with $\lambda=e^{2\pi i/n}$ and $\psi(z)=g(z^n)$ for some $g \in H^{\infty}.$ Then 
	$$W(C_{\psi,\phi})=(\psi(\mathbb{D})\cup \lambda \psi(\mathbb{D}) \cup \ldots \cup \lambda^{n-1}\psi(\mathbb{D}))^{\wedge}.$$
\end{theorem}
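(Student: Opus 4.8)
The plan is to exploit the orthogonal decomposition $L_a^2(dA_{\alpha})=L_0\oplus L_1\oplus\ldots\oplus L_{n-1}$ established above and to show that each summand $L_j$ is invariant under $C_{\psi,\phi}$, with the restriction being a scalar multiple of a multiplication operator. The conceptual heart is the observation that on $L_j$ the composition symbol $\phi$ acts as a scalar.

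First I would verify the reducing structure. Take $f\in L_j$, say $f(z)=z^j h(z^n)$ with $h\in L_a^2(dA_{\alpha})$. Since $\lambda=e^{2\pi i/n}$ satisfies $\lambda^n=1$, a direct substitution gives $f(\lambda z)=(\lambda z)^j h((\lambda z)^n)=\lambda^j z^j h(z^n)=\lambda^j f(z)$, so that $f\circ\phi=\lambda^j f$. Because $\psi(z)=g(z^n)$, multiplication by $\psi$ sends $z^j h(z^n)$ to $z^j (gh)(z^n)$, whence $M_{\psi}(L_j)\subseteq L_j$. Combining the two facts,
$$C_{\psi,\phi}f=\psi\cdot(f\circ\phi)=\lambda^j\psi f=\lambda^j M_{\psi}f\qquad\text{for all }f\in L_j,$$
so $C_{\psi,\phi}|_{L_j}=\lambda^j M_{\psi}|_{L_j}$ and each $L_j$ is invariant. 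Boundedness of $C_{\psi,\phi}$ is automatic from $\psi\in H^{\infty}$ by the remark in Section \ref{S0}.

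Next, since the finitely many $L_j$ orthogonally span the whole space and each is invariant, I would write $C_{\psi,\phi}=\bigoplus_{j=0}^{n-1}\lambda^j M_{\psi}|_{L_j}$. For a finite orthogonal direct sum of bounded operators the numerical range is the convex hull of the union of the numerical ranges of the summands: splitting a unit vector into its components exhibits $\langle(\bigoplus_j A_j)f,f\rangle$ as a convex combination of points of the $W(A_j)$, giving $W(\bigoplus_j A_j)\subseteq(\bigcup_j W(A_j))^{\wedge}$, while the reverse inclusion follows from $W(A_j)\subseteq W(\bigoplus_j A_j)$ together with the convexity of the numerical range. Applying this yields
$$W(C_{\psi,\phi})=\Big(\bigcup_{j=0}^{n-1}W(\lambda^j M_{\psi}|_{L_j})\Big)^{\wedge}.$$

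Finally I would evaluate each piece. Using $W(cA)=c\,W(A)$ and Lemma \ref{Lem1} (which, once $M_{\psi}(L_j)\subseteq L_j$ is known, gives $W(M_{\psi}|_{L_j})=\psi(\mathbb{D})^{\wedge}$), each summand equals $\lambda^j\psi(\mathbb{D})^{\wedge}=(\lambda^j\psi(\mathbb{D}))^{\wedge}$. Since the convex hull of a union of convex hulls is the convex hull of the union, the displayed expression collapses to
$$W(C_{\psi,\phi})=\Big(\bigcup_{j=0}^{n-1}\lambda^j\psi(\mathbb{D})\Big)^{\wedge}=(\psi(\mathbb{D})\cup\lambda\psi(\mathbb{D})\cup\ldots\cup\lambda^{n-1}\psi(\mathbb{D}))^{\wedge},$$
as claimed. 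I do not anticipate a genuine obstacle; the only points requiring a little care are the direct-sum formula for the numerical range and the bookkeeping ensuring that the nested convex hulls flatten correctly. Everything else reduces to the identity $f\circ\phi=\lambda^j f$ on $L_j$, which converts $C_{\psi,\phi}$ into $\lambda^j M_{\psi}$ on each summand and thereby delegates the computation to Lemma \ref{Lem1}.
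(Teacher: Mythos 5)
Your proposal is correct and follows essentially the same route as the paper: decompose $L_a^2(dA_{\alpha})$ as $L_0\oplus\cdots\oplus L_{n-1}$, observe that $C_{\psi,\phi}|_{L_j}=\lambda^j M_{\psi}|_{L_j}$, invoke Lemma \ref{Lem1} for $W(M_{\psi}|_{L_j})=\psi(\mathbb{D})^{\wedge}$, and take the convex hull of the union over the summands. The only difference is that you spell out the direct-sum formula for the numerical range and the invariance/reducing argument in slightly more detail than the paper does.
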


\begin{proof}
	If $f \in L_j$ then $f(\phi(z))=f(\lambda z)=\lambda^jf(z)$ and we have $C_{\phi}(L_j)\subseteq L_j.$ Since $\psi(z)=g(z^n)$ and  $\psi$ is bounded on $\mathbb{D}$ so $M_{\psi}(L_j)\subseteq L_j$ and so $C_{\psi,\phi}(L_j) \subseteq L_j.$ This implies that 
	$$C_{\psi,\phi}(L_j)=C_0\oplus C_1 \oplus \ldots \oplus C_{n-1},$$
	where $C_j=C_{\psi,\phi}|_{L_j}.$ For any $h \in L_j$ with $\|h\|=1$ we have
	$\langle C_jh,h \rangle=\lambda^j \langle \psi h,h \rangle$ and this implies that $W(C_j)=\lambda^jW(M_{\psi}|_{L_j}).$ Hence from the Lemma \ref{Lem1} we have $W(C_j)=\lambda^j\phi(\mathbb{D})^{\wedge}.$ Thus 
	\begin{align*}
		W(C_{\psi,\phi})&=(W(C_0)\cup W(C_1)\cup \ldots \cup W(C_{n-1}))^{\wedge}\\
		&=(\psi(\mathbb{D})\cup \lambda \psi(\mathbb{D}) \cup \ldots \cup \lambda^{n-1}\psi(\mathbb{D}))^{\wedge},
	\end{align*}
as desired.
\end{proof}

The following corollary follows immediately from Theorem \ref{Th1}.

\begin{cor}\label{C2}
	If $\phi(z)=-z$ then $W(C_{\phi})=[-1,1]$ and if $\phi(z)=e^{2\pi i/n}z$ with $n>2$ then $W(C_{\phi})$ is the closed, regular polygonal region with $n$ sides and inscribed in the unit circle. 
\end{cor}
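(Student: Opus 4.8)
The plan is to derive Corollary \ref{C2} directly from Theorem \ref{Th1} by taking the unweighted case $\psi \equiv 1$, so that $C_{\psi,\phi}$ becomes $C_\phi$. With $\psi = 1$ we have $\psi(\mathbb{D}) = \{1\}$, and Theorem \ref{Th1} (applied with $g \equiv 1$, which is trivially in $H^\infty$ and satisfies $\psi(z) = g(z^n) = 1$) gives
\begin{align*}
	W(C_\phi) = \bigl(\{1\} \cup \{\lambda\} \cup \cdots \cup \{\lambda^{n-1}\}\bigr)^{\wedge},
\end{align*}
where $\lambda = e^{2\pi i/n}$. So the numerical range is exactly the convex hull of the $n$-th roots of unity. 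The remaining work is purely geometric: to identify this convex hull in the two stated regimes.

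For the case $\phi(z) = -z$, I would note that this corresponds to $n = 2$, where $\lambda = e^{\pi i} = -1$. The set of roots is $\{1, -1\}$, and its convex hull is the segment $[-1,1]$ on the real axis, giving $W(C_\phi) = [-1,1]$ immediately. For the case $n > 2$ with $\phi(z) = e^{2\pi i/n} z$, the points $1, \lambda, \ldots, \lambda^{n-1}$ are precisely the $n$ distinct $n$-th roots of unity, equally spaced on the unit circle $|z| = 1$. Their convex hull is by definition the closed regular $n$-gon with vertices at these roots, inscribed in the unit circle. I would emphasize that for $n > 2$ these $n$ points are in convex position (no one lies in the convex hull of the others, since they are distinct points on a circle), so all $n$ of them are genuine vertices and the region is a nondegenerate polygon rather than a lower-dimensional set.

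The only genuinely delicate point is the degenerate behavior at $n = 2$: there the two roots are antipodal, the convex hull collapses to a line segment, and one should not call it a ``polygon'' — which is exactly why the corollary separates $n = 2$ from $n > 2$. Apart from this bookkeeping, the argument is a one-line specialization of Theorem \ref{Th1} followed by the elementary observation that the convex hull of the equally spaced $n$-th roots of unity is the inscribed regular $n$-gon. I do not anticipate any real obstacle; the main content of the corollary lives entirely in Theorem \ref{Th1}, and this statement simply records its most transparent consequence.
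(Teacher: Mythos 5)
Your proposal is correct and is exactly the paper's route: the authors state that Corollary \ref{C2} follows immediately from Theorem \ref{Th1}, and your specialization to $\psi\equiv 1$ (so $W(C_\phi)$ is the convex hull of the $n$-th roots of unity, degenerating to $[-1,1]$ when $n=2$) is precisely the intended argument, just written out in full.
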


In the following, we classify some subsets of $\mathbb{C}$ which are the numerical range of weighted composition operators acting on $L_a^2(dA_{\alpha}).$ To do this we start with the following definition.
 
A subset $S$ of $\mathbb{C}$ is said to have $n$-fold symmetry about the origin if it satisfies $e^{2\pi i/n}S=S.$

\begin{theorem}\label{Th2}
	Let $S$ be an non-empty, open, bounded and convex subset of $\mathbb{C}.$ If $S$ has $n$-fold symmetry about the origin then  for any $\alpha>-1$ there exists $C_{\psi,\phi} \in  \mathbb{B}\left({L_a^2(dA_{\alpha})}\right)$ such that $W(C_{\psi,\phi})=S.$
\end{theorem}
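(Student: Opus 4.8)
The plan is to realize $S$ directly as the symbol image of a weighted composition operator of the type treated in Theorem \ref{Th1}, letting the $n$-fold symmetry absorb all the work. I assume $n \geq 2$ (the degenerate case $n=1$ is handled by a single multiplication operator $M_g$ via Corollary \ref{C1}). Set $\lambda = e^{2\pi i/n}$ and $\phi(z) = \lambda z$, exactly as in Theorem \ref{Th1}, and I would choose $g \in H^{\infty}$ so that the symbol $\psi(z) = g(z^n)$ yields $W(C_{\psi,\phi}) = S$.

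First I would record two elementary facts. Since $S$ is open, bounded and convex, it is a nonempty, simply connected, proper subdomain of $\mathbb{C}$; hence by the Riemann mapping theorem there is a conformal bijection $g : \mathbb{D} \to S$, and because $S$ is bounded this $g$ lies in $H^{\infty}$. Consequently $\psi(z) = g(z^n)$ also belongs to $H^{\infty}$, so by the boundedness criterion \eqref{E1B1} the operator $C_{\psi,\phi}$ is bounded on $L_a^2(dA_{\alpha})$. Second, the map $z \mapsto z^n$ carries $\mathbb{D}$ onto $\mathbb{D}$, whence
$$\psi(\mathbb{D}) = \{g(z^n) : z \in \mathbb{D}\} = g(\mathbb{D}) = S.$$
Applying Theorem \ref{Th1} then gives
$$W(C_{\psi,\phi}) = \left(\psi(\mathbb{D}) \cup \lambda\psi(\mathbb{D}) \cup \cdots \cup \lambda^{n-1}\psi(\mathbb{D})\right)^{\wedge} = \left(S \cup \lambda S \cup \cdots \cup \lambda^{n-1}S\right)^{\wedge},$$
and the symmetry $e^{2\pi i/n}S = S$ forces $\lambda^k S = S$ for every $k$, so the union collapses to $S$; since $S$ is convex, $S^{\wedge} = S$, and therefore $W(C_{\psi,\phi}) = S$.

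The decisive point is structural rather than computational: the symmetry hypothesis is exactly what permits taking $g(\mathbb{D})$ to be all of $S$. Without $\lambda^k S = S$, the rotated copies together with the convex-hull operation would enlarge the set past $S$, and one would instead have to design a fundamental-domain image $g(\mathbb{D}) \subsetneq S$ whose rotations have convex hull precisely $S$ --- a substantially harder construction. The only items needing care are thus the routine verifications that a nonempty bounded open convex set is simply connected and proper (so that Riemann applies) and that $\psi \in H^{\infty}$ secures boundedness of $C_{\psi,\phi}$. I would also observe, though it is not needed here, that $n$-fold symmetry with $n \geq 2$ automatically puts $0$ in $S$, since the centroid $\tfrac{1}{n}\sum_{k=0}^{n-1}\lambda^k p = 0$ of any orbit $\{\lambda^k p\}$ lies in the convex set $S$.
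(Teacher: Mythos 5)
Your proof is correct and follows essentially the same route as the paper: take a Riemann map $g$ of $\mathbb{D}$ onto $S$, set $\psi(z)=g(z^n)$ and $\phi(z)=e^{2\pi i/n}z$, and invoke Theorem \ref{Th1} together with the $n$-fold symmetry and convexity of $S$ to collapse $\left(S\cup\lambda S\cup\cdots\cup\lambda^{n-1}S\right)^{\wedge}$ to $S$. The additional details you supply --- the $n=1$ case, the verification that $\psi\in H^{\infty}$ gives boundedness of $C_{\psi,\phi}$, and the applicability of the Riemann mapping theorem --- are refinements the paper leaves implicit, not a different argument.
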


\begin{proof}
	Let $f$ be a Riemann map from $\mathbb{D}$ onto $S,$ and we consider the function $\psi : \mathbb{D} \to S$ such that $\psi(z)=f(z^n)$ for all $z \in \mathbb{D}.$ It is easy to verify that $\psi(\mathbb{D})=f(\mathbb{D})$ and so we obtain $\psi(\mathbb{D})=S.$ Let $\phi(z)=\lambda z$ where $\lambda=e^{2\pi i/n}.$ By applying Theorem \ref{Th1} we get 
	\begin{align}\label{Th2e1}
	W(C_{\psi,\phi})=(\psi(\mathbb{D})\cup \lambda \psi(\mathbb{D}) \cup \ldots \cup \lambda^{n-1}\psi(\mathbb{D}))^{\wedge}.
	\end{align}
    Since $S$ is a $n$-fold symmetry about the origin so $\lambda^k\psi(\mathbb{D})=S$ for all $0 \leq k \leq n-1.$ As $S$ is convex so from \eqref{Th2e1} we obtain the the desired result.
\end{proof}

\begin{cor}
	Let $f \in H^{\infty}$ be nonconstant. If $n>1$ then for any $\alpha>-1$ there exists $C_{\psi,\phi} \in  \mathbb{B}\left({L_a^2(dA_{\alpha})}\right)$ such that  $W(C_{\psi,\phi})$ is the smallest convex set with $n$-fold symmetry about the origin with 
	$W(C_{\psi,\phi}) \supseteq f(\mathbb{D}).$
\end{cor}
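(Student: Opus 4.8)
The plan is to invoke Theorem \ref{Th1} directly, specializing it to $g=f$. Set $\lambda=e^{2\pi i/n}$, take $\phi(z)=\lambda z$, and define $\psi(z)=f(z^n)$. Since $f\in H^{\infty}$, the function $\psi$ is again bounded and analytic on $\mathbb{D}$, so by the observation following \eqref{E1B1} the operator $C_{\psi,\phi}$ is bounded on $L_a^2(dA_{\alpha})$; moreover $\phi$ is an analytic self-map of $\mathbb{D}$ because $|\lambda|=1$. Thus the hypotheses of Theorem \ref{Th1} are met, and the construction is legitimate for every $\alpha>-1$.

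First I would record the identity $\psi(\mathbb{D})=f(\mathbb{D})$: the map $z\mapsto z^{n}$ carries $\mathbb{D}$ onto $\mathbb{D}$, so the image of $\psi$ coincides with the image of $f$. Feeding this into the conclusion of Theorem \ref{Th1} gives
$$W(C_{\psi,\phi})=\left(\bigcup_{k=0}^{n-1}\lambda^{k}f(\mathbb{D})\right)^{\wedge},$$
and I will denote this convex hull by $K$. It then remains only to identify $K$ as the smallest convex set with $n$-fold symmetry about the origin containing $f(\mathbb{D})$. That $K\supseteq f(\mathbb{D})$ (the $k=0$ term) and that $K$ is convex (it is a convex hull) are immediate, so in particular $W(C_{\psi,\phi})\supseteq f(\mathbb{D})$.

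For the symmetry, I would note that multiplication by $\lambda$ permutes the sets $\lambda^{k}f(\mathbb{D})$ cyclically because $\lambda^{n}=1$, so the union $\bigcup_{k}\lambda^{k}f(\mathbb{D})$ is invariant under $z\mapsto\lambda z$; since this map is linear it commutes with the convex-hull operation, whence $\lambda K=K$. For minimality, let $C$ be any convex set with $\lambda C=C$ and $C\supseteq f(\mathbb{D})$. Then $C=\lambda^{k}C\supseteq\lambda^{k}f(\mathbb{D})$ for every $k$, so $C$ contains the whole union and, being convex, contains its convex hull $K$. Hence $K\subseteq C$, and $K$ is the minimal such set, as claimed. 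The argument has no genuine obstacle once Theorem \ref{Th1} is applied; the only point worth stating carefully is the interchange of the linear map $z\mapsto\lambda z$ with the convex-hull operation, which is precisely what upgrades the invariance of the union to the invariance of $K$ and thereby yields the $n$-fold symmetry.
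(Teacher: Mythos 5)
Your proposal is correct and follows essentially the same route as the paper: define $\psi(z)=f(z^n)$ and $\phi(z)=e^{2\pi i/n}z$, apply Theorem \ref{Th1} to identify $W(C_{\psi,\phi})$ as the convex hull of $\bigcup_{k=0}^{n-1}\lambda^k f(\mathbb{D})$, and then verify symmetry and minimality exactly as the paper does. The only difference is that you spell out a little more carefully why the linear map $z\mapsto\lambda z$ commutes with taking convex hulls, which the paper leaves implicit.
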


\begin{proof}
	Let us define $\psi : \mathbb{D} \to \mathbb{C}$ such that $\psi(z)=f(z^n)$ for all $z \in \mathbb{D}.$ Then $\psi(\mathbb{D})=f(\mathbb{D}).$ Let $\phi(z)=\lambda z$ where $\lambda=e^{2\pi i/n}.$ From Theorem \ref{Th1} we get 
	\begin{align}\label{Th2e2}
		W(C_{\psi,\phi})=(f(\mathbb{D})\cup \lambda f(\mathbb{D}) \cup \ldots \cup \lambda^{n-1}f(\mathbb{D}))^{\wedge}.
	\end{align}
 As $\lambda W(C_{\psi,\phi})=W(C_{\psi,\phi})$ so $W(C_{\psi,\phi})$ has a $n$-fold symmetry about the origin and it is a convex set. If $M$ is a convex set and has a $n$-fold symmetry about the origin with $M \supseteq f(\mathbb{D})$ then $M \supseteq \lambda^k f(\mathbb{D})$ for all $0\leq k \leq n-1.$ Thus from \eqref{Th2e2} it follows that $M \supseteq W(C_{\psi,\phi}).$ Therefore, $W(C_{\psi,\phi})$ is the smallest convex set with $n$-fold symmetry about the origin with $W(C_{\psi,\phi}) \supseteq f(\mathbb{D}).$
\end{proof}

We conclude this section by asking which non-empty, bounded, and convex subsets of $\mathbb{C}$ can be the numerical range of weighted composition operators acting on $L_a^2(dA_{\alpha}).$

\section{Containment of zero in the numerical range}\label{S2}

Our main focus in this section is to investigate the containment of the origin in the interior of the numerical range of weighted composition operators on $L_a^2(dA_{\alpha}).$ In the first result we study the case when the origin is contained in the numerical range as well as in its closure for the sum of two weighted composition operators. To do so we recall the definition of radical limit.

A function $f \in H(\mathbb{D})$ is said to be has a radical limit if $\lim_{r \to 1} f(re^{i \theta})$ exists almost everywhere in $\partial \mathbb{D}.$
It is proved in \cite[Th. 11.32]{RUDIN_BOOK} that for every $f \in H^{\infty}$ there corresponds a function $f^* \in L^{\infty}(\partial \mathbb{D}),$ defined almost everywhere by
\begin{align*}
	f^*(e^{i \theta})=\lim_{r \to 1} f(re^{i \theta}).
\end{align*}
Moreover, if $f^*(e^{i \theta})=0$ for almost all $e^{i \theta}$ on some arc $I \subseteq \partial \mathbb{D},$ then $f(z)=0$ for all $z \in \mathbb{D}.$

Now, we are in a position to prove the first result of this section.

\begin{theorem}\label{Theo1}
	Let $\phi_1$ and $\phi_2$ be two holomorphic self maps on $\mathbb{D}$ and $\psi_1,\psi_2 \in H(\mathbb{D})$ be such that $C_{\psi_1,\phi_1},C_{\psi_2,\phi_2} \in  \mathbb{B}\left({L_a^2(dA_{\alpha})}\right).$ 
	\begin{align*}
		&(i)~~ \mbox{If $\phi_1$ and $\phi_2$ are identity maps on $\mathbb{D},$ and $\phi_1, \phi_2$ have a common zero in $\mathbb{D},$ }\\
		& ~~\mbox{then $0 \in W(C_{\psi_1,\phi_1}+C_{\psi_2,\phi_2}).$}\\ 
		&(ii)~~ \mbox{If $\phi_1,\phi_2$ are not identity maps on $\mathbb{D}$ and $\psi_1, \psi_2 \in H^{\infty},$ then $0 \in \overline{W(C_{\psi_1,\phi_1}+C_{\psi_2,\phi_2})}.$} 
	\end{align*}
\end{theorem}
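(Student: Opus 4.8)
The plan is to evaluate the quadratic form of the operator on the normalized reproducing kernels $\hat k^\alpha_w$, reading off $0$ exactly in part (i) and as a boundary limit in part (ii). For part (i), since $\phi_1$ and $\phi_2$ are the identity, the adjoint relation $C^*_{\psi,\phi}k^\alpha_w=\overline{\psi(w)}k^\alpha_{\phi(w)}$ recalled in Section \ref{S0} specializes to $C^*_{\psi_i,\phi_i}k^\alpha_w=\overline{\psi_i(w)}k^\alpha_w$, so that
\[
\langle (C_{\psi_1,\phi_1}+C_{\psi_2,\phi_2})\hat k^\alpha_w,\hat k^\alpha_w\rangle=\psi_1(w)+\psi_2(w)
\]
for every $w\in\mathbb{D}$. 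Evaluating at a point $w_0$ where $\psi_1+\psi_2$ vanishes (in particular, at a common zero of $\psi_1$ and $\psi_2$) places $0$ in $W(C_{\psi_1,\phi_1}+C_{\psi_2,\phi_2})$. I would simply record this identity and specialize to $w=w_0$.

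For part (ii) I would again use $C^*_{\psi,\phi}k^\alpha_w=\overline{\psi(w)}k^\alpha_{\phi(w)}$ together with the reproducing property and the value $\|k^\alpha_w\|^2=(1-|w|^2)^{-(\alpha+2)}$ to obtain, for each weighted composition operator,
\[
\langle C_{\psi,\phi}\hat k^\alpha_w,\hat k^\alpha_w\rangle=\psi(w)\left(\frac{1-|w|^2}{1-\bar w\,\phi(w)}\right)^{\alpha+2}.
\]
Adding the two contributions expresses $\langle(C_{\psi_1,\phi_1}+C_{\psi_2,\phi_2})\hat k^\alpha_w,\hat k^\alpha_w\rangle$ as a sum of two terms of this shape, each value lying in $W(C_{\psi_1,\phi_1}+C_{\psi_2,\phi_2})$. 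The goal is then to drive this quantity to $0$ by letting $w=re^{i\theta}\to e^{i\theta}$ radially for a well-chosen $\theta$.

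The core of the argument is the boundary estimate that, for a non-identity holomorphic self-map $\phi$ of $\mathbb{D}$,
\[
\lim_{r\to1}\frac{1-r^2}{\left|1-re^{-i\theta}\phi(re^{i\theta})\right|}=0\qquad\text{for almost every }\theta.
\]
Because $\phi\in H^\infty$, its radial limit $\phi^*(e^{i\theta})$ exists almost everywhere with $|\phi^*(e^{i\theta})|\le1$, so $1-\bar w\,\phi(w)\to 1-e^{-i\theta}\phi^*(e^{i\theta})$ as $r\to1$; this limit is nonzero exactly when $\phi^*(e^{i\theta})\ne e^{i\theta}$, and then the displayed ratio tends to $0$. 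Everything thus reduces to showing that the set of radial boundary fixed points $\{\theta:\phi^*(e^{i\theta})=e^{i\theta}\}$ is null. This is where the non-identity hypothesis and the radial-limit uniqueness recalled before the theorem are used: $\phi-\mathrm{id}$ is a nonzero bounded analytic function, so its radial limits vanish only on a set of measure zero, forcing $\phi^*(e^{i\theta})\ne e^{i\theta}$ almost everywhere. I expect this step, establishing that the exceptional directions form a null set, to be the main obstacle, and it is the only place the cited uniqueness result is invoked.

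To conclude, I would apply the boundary estimate to both $\phi_1$ and $\phi_2$ and pick a single $\theta$ lying outside both (null) exceptional sets. Along $w=re^{i\theta}$ each kernel ratio has modulus tending to $0$, while $|\psi_i(w)|\le\|\psi_i\|_{\infty}$ remains bounded and the positive exponent $\alpha+2$ preserves the decay via $|z^{\alpha+2}|=|z|^{\alpha+2}$. Hence $\langle(C_{\psi_1,\phi_1}+C_{\psi_2,\phi_2})\hat k^\alpha_{re^{i\theta}},\hat k^\alpha_{re^{i\theta}}\rangle\to0$; since each of these numbers belongs to $W(C_{\psi_1,\phi_1}+C_{\psi_2,\phi_2})$, we obtain $0\in\overline{W(C_{\psi_1,\phi_1}+C_{\psi_2,\phi_2})}$.
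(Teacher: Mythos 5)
Your proposal is correct and follows essentially the same route as the paper: both compute $\langle (C_{\psi_1,\phi_1}+C_{\psi_2,\phi_2})\hat k^{\alpha}_w,\hat k^{\alpha}_w\rangle$ explicitly via the adjoint action on reproducing kernels, evaluate at a common zero of $\psi_1,\psi_2$ for (i) (the hypothesis ``$\phi_1,\phi_2$ have a common zero'' is a typo for $\psi_1,\psi_2$, and you read it as the paper's own proof does), and for (ii) let $w\to e^{i\theta}$ radially for a $\theta$ avoiding the null sets of boundary fixed points of $\phi_1,\phi_2$. Your write-up is in fact slightly more careful than the paper's in isolating the measure-zero claim and the role of $|z^{\alpha+2}|=|z|^{\alpha+2}$, but there is no substantive difference in method.
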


\begin{proof}
	For any $w \in \mathbb{D},$ we have
	\begin{align}\label{Theo1e1}
		&\langle (C_{\psi_1,\phi_1}+C_{\psi_2,\phi_2})\hat{k}^{\alpha}_w, \hat{k}^{\alpha}_w\rangle \nonumber\\
		&=\frac{1}{\|{k}^{\alpha}_w\|^2}\langle {k}^{\alpha}_w, (C_{\psi_1,\phi_1}+C_{\psi_2,\phi_2})^*{k}^{\alpha}_w\rangle \nonumber\\
		&=\frac{1}{\|{k}^{\alpha}_w\|^2}\left(\langle {k}^{\alpha}_w, \overline{\psi_1(w)} {k}^{\alpha}_{\phi_1(w)}\rangle+\langle {k}^{\alpha}_w, \overline{\psi_2(w)} {k}^{\alpha}_{\phi_2(w)}\rangle\right) \nonumber\\
		&=\frac{\psi_1(w)}{\|{k}^{\alpha}_w\|^2}{k}^{\alpha}_{w}(\phi_1(w))+\frac{\psi_2(w)}{\|{k}^{\alpha}_w\|^2}{k}^{\alpha}_{w}(\phi_2(w)) \nonumber\\
		&=\frac{\psi_1(w)(1-|w|^2)^{\alpha+2}}{(1-\bar{w}\phi_1(w))^{\alpha+2}}+\frac{\psi_2(w)(1-|w|^2)^{\alpha+2}}{(1-\bar{w}\phi_2(w))^{\alpha+2}}.
	\end{align}
	$(i)$ Since $\phi_1$ and $\phi_2$ are identity maps on $\mathbb{D}$ then from \eqref{Theo1e1} we have 
	$$\langle (C_{\psi_1,\phi_1}+C_{\psi_2,\phi_2})\hat{k}^{\alpha}_w, \hat{k}^{\alpha}_w\rangle=\psi_1(w)+\psi_2(w).$$ If $\psi_1(w_0)=\psi_2(w_0)=0$ for $w_0 \in \mathbb{D}$ then we get $$\langle (C_{\psi_1,\phi_1}+C_{\psi_2,\phi_2})\hat{k}^{\alpha}_{w_0}, \hat{k}^{\alpha}_{w_0}\rangle=0,$$ as desired.\\
	$(ii)$ As $\phi_1,\phi_2$ are not identity maps then the sets $\{e^{i \theta} : \phi^*_1(e^{i \theta})=e^{i \theta}\}$ and $\{e^{i \theta} : \phi^*_2(e^{i \theta})=e^{i \theta}\}$ have measure zero on $\partial \mathbb{D}.$  So there exists a $w_0 \in \partial \mathbb{D}$ such that $\psi^*_i(w_0)$ exists and $\phi^*_i(w_0) \neq w_0$ for $i=1,2.$ Hence from \eqref{Theo1e1} we have 
	$$\lim_{w \to w_0}\langle (C_{\psi_1,\phi_1}+C_{\psi_2,\phi_2})\hat{k}^{\alpha}_w, \hat{k}^{\alpha}_w\rangle =0.$$ Therefore,
	$ 0 \in \overline{W(C_{\psi_1,\phi_1}+C_{\psi_2,\phi_2})}$ and this completes the proof.
\end{proof}

Next, we completely characterize the numerical range of the bounded weighted composition operators induced by constant composition maps.

\begin{proposition}\label{pro1}
	Let $\phi \equiv w$ with $|w|<1$ be such that $C_{\psi,\phi} \in  \mathbb{B}\left({L_a^2(dA_{\alpha})}\right).$ 
	\begin{align*}
		&(i)~~ \mbox{If $k^{\alpha}_w=\mu \psi$ for some $\mu \neq 0$ then $W(C_{\psi,\phi})=[0, \bar{\mu}\|\psi\|^2].$}\\ 
		&(ii)~~ \mbox{If $k^{\alpha}_w \perp \psi$ then $W(C_{\psi,\phi})$ is the closed disc centred at the origin and radius $\frac{\|\psi\|}{2(1-|w|^2)^{\frac{\alpha}{2}+1}}.$ }\\
		&(iii)~~ \mbox{Otherwise $W(C_{\psi,\phi})$ is a closed ellipse with foci at $0$ and $\psi(w).$} 
	\end{align*}
\end{proposition}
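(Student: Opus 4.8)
The plan is to exploit the fact that, because the composition symbol $\phi\equiv w$ is constant, $C_{\psi,\phi}$ is a \emph{rank-one} operator. For $f\in L_a^2(dA_{\alpha})$ the reproducing property gives
$C_{\psi,\phi}f=\psi\,(f\circ\phi)=f(w)\,\psi=\langle f,k^{\alpha}_w\rangle\,\psi$,
so $C_{\psi,\phi}=\psi\otimes k^{\alpha}_w$ in the notation $(\psi\otimes k^{\alpha}_w)f=\langle f,k^{\alpha}_w\rangle\psi$. Taking $f\equiv 1$ shows $\psi=C_{\psi,\phi}1\in L_a^2(dA_{\alpha})$, so $\|\psi\|$ is finite and the reproducing formula $\langle\psi,k^{\alpha}_w\rangle=\psi(w)$ is legitimate. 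A one-line computation gives $C_{\psi,\phi}\psi=\langle\psi,k^{\alpha}_w\rangle\psi=\psi(w)\psi$, so the only possible nonzero eigenvalue is $\psi(w)$; this is the value that will appear as the second focus.

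Next I would reduce the numerical range to a finite-dimensional compression on $M=\mathrm{span}\{\psi,k^{\alpha}_w\}$. For a unit vector $f$, writing $g=P_Mf$ and using $\langle f,k^{\alpha}_w\rangle=\langle g,k^{\alpha}_w\rangle$ and $\langle\psi,f\rangle=\langle\psi,g\rangle$ (valid since $\psi,k^{\alpha}_w\in M$), one gets $\langle C_{\psi,\phi}f,f\rangle=\langle g,k^{\alpha}_w\rangle\langle\psi,g\rangle$, which depends only on $g$ and satisfies $\|g\|\le 1$. Conversely every $g\in M$ with $\|g\|\le 1$ is realized by completing to a unit vector using $M^{\perp}$, which is nontrivial as $L_a^2(dA_{\alpha})$ is infinite-dimensional. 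Hence
$W(C_{\psi,\phi})=\{\langle g,k^{\alpha}_w\rangle\langle\psi,g\rangle:g\in M,\ \|g\|\le 1\}$.

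The three cases then split according to $\dim M$. If $\dim M=2$ (cases (ii) and (iii)), the displayed set equals $W(A)$ for the $2\times 2$ compression $A=C_{\psi,\phi}|_M$: the key point is that $0\in W(A)$, since the one-dimensional subspace $\{g\in M:\langle g,k^{\alpha}_w\rangle=0\}$ is annihilated by $A$, and then convexity of $W(A)$ forces $t\,W(A)\subseteq W(A)$ for $t\in[0,1]$, which absorbs the vectors with $\|g\|<1$. Thus $W(C_{\psi,\phi})=W(A)$ is compact, and the elliptical range theorem (see \cite{wu_gau_book}) identifies it as the closed elliptical disc with foci the eigenvalues $0$ and $\psi(w)$ and minor axis $\sqrt{\mathrm{tr}(A^{*}A)-|\psi(w)|^{2}}$. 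Here $\mathrm{tr}(A^{*}A)=\|\psi\|^{2}\|k^{\alpha}_w\|^{2}$, and $\|k^{\alpha}_w\|^{2}=k^{\alpha}_w(w)=(1-|w|^{2})^{-(\alpha+2)}$. In case (ii), $\psi\perp k^{\alpha}_w$ gives $\psi(w)=\langle\psi,k^{\alpha}_w\rangle=0$, so the foci coincide at the origin and the disc is the circle of radius $\tfrac12\|\psi\|\|k^{\alpha}_w\|=\frac{\|\psi\|}{2(1-|w|^{2})^{\alpha/2+1}}$. In case (iii) the symbols are neither parallel (so Cauchy--Schwarz is strict and the minor axis is positive) nor orthogonal (so $\psi(w)\neq 0$ and the foci are distinct), yielding a genuine closed ellipse with foci $0$ and $\psi(w)$. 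For case (i), where $k^{\alpha}_w=\mu\psi$, one has $\dim M=1$ and the convexity trick is unavailable (now $0\notin W(A)$), so I compute directly: with $g=c\psi$ and $\|g\|=|c|\|\psi\|\le 1$ one finds $\langle g,k^{\alpha}_w\rangle\langle\psi,g\rangle=|c|^{2}\bar{\mu}\|\psi\|^{4}$, which sweeps out precisely the segment $[0,\bar{\mu}\|\psi\|^{2}]$.

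The main obstacle is the passage from the infinite-dimensional numerical range to the $2\times 2$ compression, specifically the justification that the contractions $\|g\|<1$ add nothing new; this rests entirely on $0$ already lying in $W(A)$ together with convexity. Once that reduction is secured, the elliptical range theorem does the geometric work uniformly, and the three stated cases are just the three degeneracy regimes (parallel $\Rightarrow$ segment, orthogonal $\Rightarrow$ circle, generic $\Rightarrow$ ellipse) of one elliptical disc with foci $0$ and $\psi(w)$.
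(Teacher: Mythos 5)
Your proposal is correct, and it begins exactly where the paper does: the observation that $\phi\equiv w$ forces $C_{\psi,\phi}f=\langle f,k^{\alpha}_w\rangle\psi$, i.e.\ that $C_{\psi,\phi}$ is the rank-one operator $\psi\otimes k^{\alpha}_w$. The difference is what happens next. The paper stops there and invokes \cite[Prop.\ 2.5]{BS_IEOT_2002}, which is precisely the classification of numerical ranges of rank-one operators (segment when the two vectors are parallel, disc when orthogonal, ellipse otherwise); your write-up instead reproves that classification from scratch. Your route --- compress to $M=\mathrm{span}\{\psi,k^{\alpha}_w\}$, note that the contractions $\|g\|\le 1$ contribute nothing beyond $W(A)$ because $0\in W(A)$ and $W(A)$ is convex, then apply the elliptical range theorem with eigenvalues $0$ and $\psi(w)$ and $\mathrm{tr}(A^{*}A)=\|\psi\|^{2}\|k^{\alpha}_w\|^{2}=\|\psi\|^{2}(1-|w|^{2})^{-(\alpha+2)}$ --- is sound, and the separate one-dimensional computation in case (i), where the convexity trick is unavailable, is handled correctly and yields $[0,\bar{\mu}\|\psi\|^{2}]$. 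So the two proofs rest on the same reduction; the paper's buys brevity by outsourcing the geometry to Bourdon--Shapiro, while yours buys self-containment and makes visible where each of the three degeneracy regimes comes from (strict Cauchy--Schwarz for a positive minor axis in (iii), $\psi(w)=0$ for coincident foci in (ii)).
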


\begin{proof}
	Since $\phi$ is a constant function and $C_{\psi,\phi} \in  \mathbb{B}\left({L_a^2(dA_{\alpha})}\right)$ so for any $f \in L_a^2(dA_{\alpha})$
	\begin{align*}
		C_{\psi,\phi}f=\psi f(w)=\langle f, k^{\alpha}_w \rangle \psi.
	\end{align*}
	Thus $C_{\psi,\phi}$ is a rank one operator and the desired result follows from \cite[Prop. 2.5]{BS_IEOT_2002}.
\end{proof}

Now, we state the following lemma which follows from \cite[Th. 2.6]{KEY_JME_2021}.
\begin{lemma}\label{lem1}
	Let $C_{\psi,\phi} \in  \mathbb{B}\left({L_a^2(dA_{\alpha})}\right),$ $\phi$ be a nonconstant analytic self map on $\mathbb{D}$ and $\psi$ be non-zero. If either 
	$\psi$ has a zero on $\mathbb{D}$ or $\phi$ is not one-to-one, then $0 \in$ $int$ $W(C_{\psi,\phi}).$
\end{lemma}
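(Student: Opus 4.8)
The plan is to prove the statement by exhibiting, in each of the two hypotheses, a specific unit vector (or a sequence of unit vectors) whose numerical value is zero, and then to upgrade membership of zero in $W(C_{\psi,\phi})$ to membership in the \emph{interior}. The natural test vectors are the normalized reproducing kernels $\hat{k}^\alpha_w$, since the identity $C^*_{\psi,\phi}k^\alpha_w=\overline{\psi(w)}k^\alpha_{\phi(w)}$ recorded in Section \ref{S0} gives the clean formula
\begin{align}\label{plan:diag}
	\langle C_{\psi,\phi}\hat{k}^\alpha_w,\hat{k}^\alpha_w\rangle=\frac{\psi(w)(1-|w|^2)^{\alpha+2}}{(1-\bar{w}\phi(w))^{\alpha+2}},
\end{align}
exactly as computed in the proof of Theorem \ref{Theo1}. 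From \eqref{plan:diag} it is immediate that if $\psi$ vanishes at some point $w_0\in\mathbb{D}$, then $\langle C_{\psi,\phi}\hat{k}^\alpha_{w_0},\hat{k}^\alpha_{w_0}\rangle=0$, so $0\in W(C_{\psi,\phi})$; this handles the ``$\psi$ has a zero'' case at the level of mere membership.

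First I would treat the case where $\phi$ is not one-to-one. Pick distinct points $a,b\in\mathbb{D}$ with $\phi(a)=\phi(b)=:c$, and consider a two-dimensional compression of $C_{\psi,\phi}$ to a suitable subspace spanned by kernel-type vectors at $a$ and $b$. The key point is that $C_{\psi,\phi}$ maps both $k^\alpha_a$-related and $k^\alpha_b$-related test functions into directions governed by $k^\alpha_c=k^\alpha_{\phi(a)}=k^\alpha_{\phi(b)}$, which produces a genuinely non-normal $2\times 2$ block. For such a block the numerical range is a (possibly degenerate) ellipse with nonempty interior, so it forces $0$ into the interior once one checks that $0$ lies inside rather than on the boundary. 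Concretely I would compute the off-diagonal inner products $\langle C_{\psi,\phi}\hat{k}^\alpha_a,\hat{k}^\alpha_b\rangle$ and $\langle C_{\psi,\phi}\hat{k}^\alpha_b,\hat{k}^\alpha_a\rangle$ and use the standard fact (Elliptical Range Theorem) that a $2\times 2$ matrix with distinct diagonal entries and at least one nonzero off-diagonal entry has an elliptical numerical range with interior; the cited result \cite[Th.~2.6]{KEY_JME_2021} presumably packages exactly this computation, so I would invoke it rather than redo it.

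For the case where $\psi$ has a zero, I would similarly build a two-dimensional compression using $w_0$ (where $\psi(w_0)=0$, giving a zero diagonal entry via \eqref{plan:diag}) together with a nearby point $w_1$, and argue that the resulting block is non-normal with $0$ in the interior of its elliptical numerical range. The heuristic is that a diagonal entry equal to $0$ together with a nonzero off-diagonal forces the origin strictly inside the ellipse, since the ellipse is centered at the average of the two diagonal entries and has positive minor-axis length coming from the off-diagonal term. Again \cite[Th.~2.6]{KEY_JME_2021} should supply the precise two-vector computation, so the proof reduces to verifying that the hypotheses of that theorem are met and that the geometry places the origin in the open ellipse.

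The main obstacle I anticipate is not the existence of an ellipse but the \emph{strictness}: one must rule out the degenerate situation in which $0$ lands on the boundary of the numerical range rather than its interior. This requires confirming that the relevant $2\times 2$ compression is truly non-normal (equivalently, that the two chosen kernel directions are not eigenvectors aligning the operator to a normal form) and that the two diagonal values $\langle C_{\psi,\phi}\hat{k}^\alpha_{w_0},\hat{k}^\alpha_{w_0}\rangle$ and the companion value are distinct, so the ellipse is nondegenerate. Since the statement is attributed to \cite[Th.~2.6]{KEY_JME_2021}, the cleanest route is to phrase the proof as a verification that $C_{\psi,\phi}$ on $L_a^2(dA_\alpha)$ satisfies the hypotheses of that cited theorem in each of the two cases, leaving the elliptical-range geometry to the reference; the genuinely new content is merely the translation of ``$\psi$ has a zero'' and ``$\phi$ not injective'' into the operator-theoretic conditions needed there.
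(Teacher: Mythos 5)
The paper does not actually prove this lemma: it is stated as a quotation, ``which follows from \cite[Th.~2.6]{KEY_JME_2021}'', with no argument supplied. Your bottom-line fallback --- verify that $C_{\psi,\phi}$ on $L_a^2(dA_\alpha)$ meets the hypotheses of that cited theorem and leave the geometry to the reference --- is therefore exactly what the paper does, and your formula $\langle C_{\psi,\phi}\hat{k}^\alpha_w,\hat{k}^\alpha_w\rangle=\psi(w)(1-|w|^2)^{\alpha+2}/(1-\bar{w}\phi(w))^{\alpha+2}$ is correct and does give $0\in W(C_{\psi,\phi})$ when $\psi(w_0)=0$.

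The self-contained sketch you layer on top, however, has a genuine gap precisely at the step you flag as ``the main obstacle'': passing from \emph{membership} to \emph{interiority} via a $2\times 2$ compression. First, the span of $\hat{k}^\alpha_{w_0}$ and $\hat{k}^\alpha_{w_1}$ (or of $\hat{k}^\alpha_a$, $\hat{k}^\alpha_b$) is not an orthonormal pair, so the array of inner products you propose to compute is not the compression matrix; after orthonormalizing there is no reason for the compression to be triangular. Second, and more seriously, the elliptical range theorem places the \emph{eigenvalues} of the compression at the foci --- not its diagonal entries. Only for a triangular compression (as in the paper's Theorems \ref{Theo2}, \ref{TH2}, etc., where the monomial basis makes the blocks lower triangular) do diagonal entries coincide with foci and hence lie in the interior of a nondegenerate ellipse. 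In general a zero diagonal entry together with a nonzero off-diagonal entry does \emph{not} force $0$ into the interior: the compression can be Hermitian up to a unimodular rotation, in which case its numerical range is a segment with empty interior in $\mathbb{C}$, and even when the ellipse is nondegenerate a diagonal entry may sit on its boundary. Your remark that the ellipse is ``centered at the average of the two diagonal entries'' is true (the center is $\operatorname{tr}/2$) but does not locate $0$ relative to the boundary. Ruling out all of these degeneracies for an arbitrary $\psi$ with a zero, or an arbitrary non-injective $\phi$, is the entire content of the lemma; since neither you nor the paper carries that out, the honest status of both arguments is ``deferred to \cite[Th.~2.6]{KEY_JME_2021}'', and the sketch preceding your deferral should not be presented as if it closes the gap.
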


	

In the following results we investigate the inclusion of the origin in the interior of numerical range of weighted composition operators on $L_a^2(dA_{\alpha}).$

\begin{theorem}\label{Theo2}
	Let $C_{\psi,\phi} \in  \mathbb{B}\left({L_a^2(dA_{\alpha})}\right)$ and $\phi(0)=0.$ If $\phi$ is not of the form $\phi(z)=tz$ where $t \in \overline{\mathbb{D}},$ then $0 \in$ $int$ $W(C_{\psi,\phi}).$
\end{theorem}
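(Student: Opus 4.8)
The plan is to dispose of the degenerate cases with Lemma~\ref{lem1} and then attack the remaining case by hand with explicitly chosen test vectors. Throughout I assume $\psi \not\equiv 0$ (otherwise $C_{\psi,\phi}=0$ and the statement is vacuous), and I note that $\phi$ is nonconstant, since $\phi(0)=0$ together with $\phi(z)\neq tz$ rules out $\phi\equiv 0$. Writing $\phi(z)=\sum_{k\ge 1}a_kz^k$, the hypothesis $\phi(z)\neq tz$ forces $a_k\neq 0$ for some $k\ge 2$.

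First I would split on the values $\psi(0)$ and $\phi'(0)=a_1$. If $\psi(0)=0$ then $\psi$ has a zero in $\mathbb{D}$, and if $a_1=0$ then $\phi$ has a critical point at $0$ and is therefore not injective; in either case Lemma~\ref{lem1} applies directly and gives $0\in int\,W(C_{\psi,\phi})$. So the real work is the case $\psi(0)\neq 0$ and $a_1\neq 0$, which is precisely the situation left open by Lemma~\ref{lem1}. In this case let $m\ge 2$ be the least index with $a_m\neq 0$, so $\phi(z)=a_1z+a_mz^m+\cdots$.

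The key point — and the step where the reproducing kernels $\hat k^{\alpha}_w$ are \emph{not} enough, since the values $\langle C_{\psi,\phi}\hat k^{\alpha}_w,\hat k^{\alpha}_w\rangle$ can be trapped in a half plane through $0$ when $a_1$ is real — is to test against the two-term vectors $f_{n,c}=z^n+c\,z^{n+m-1}$, $c\in\mathbb{C}$. Because $f_{n,c}$ is supported on the monomials $z^n$ and $z^{n+m-1}$, the quantity $\langle C_{\psi,\phi}f_{n,c},f_{n,c}\rangle$ depends only on the coefficients of $z^n$ and $z^{n+m-1}$ in $\psi\,\phi^n$ and $\psi\,\phi^{n+m-1}$. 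Using that $a_2=\cdots=a_{m-1}=0$, the coefficient of $z^{n+m-1}$ in $\phi^n$ is exactly $n\,a_1^{n-1}a_m$, and a short computation gives a quadratic in $c$ whose coefficient of $\bar c$ equals $\big(\psi(0)\,n\,a_1^{n-1}a_m+\hat\psi_{m-1}a_1^{n}\big)\gamma_{n+m-1}$, where $\gamma_k=\|z^k\|^2=\frac{k!\,\Gamma(\alpha+2)}{\Gamma(k+\alpha+2)}$.

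I would then let $n\to\infty$ with $|c|=1$. Since $\gamma_{n+m-1}/\gamma_n\to 1$ and the cross term carries the extra factor $n$, it dominates the remaining constant and $|c|^2$ terms, yielding
$$\frac{\langle C_{\psi,\phi}f_{n,c},f_{n,c}\rangle}{\|f_{n,c}\|^2}=\frac{\psi(0)\,a_m\,a_1^{n-1}}{1+|c|^2}\,n\,\bar c\,\big(1+o(1)\big).$$
For a fixed large $n$ the modulus of this number is a fixed positive quantity, while its argument equals $\arg(\psi(0)a_m)+(n-1)\arg a_1-\arg c+o(1)$ and therefore runs over the whole circle as $\arg c$ does. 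Choosing three values of $c$ giving arguments roughly $120^\circ$ apart produces three points of $W(C_{\psi,\phi})$ at comparable positive distance from the origin, in directions not contained in any half plane; their convex hull lies in $W(C_{\psi,\phi})$ and contains $0$ in its interior, so $0\in int\,W(C_{\psi,\phi})$. The main obstacle is exactly the recognition that the reproducing-kernel estimates only yield $0\in\overline{W(C_{\psi,\phi})}$ and must be replaced by these higher-degree test vectors, together with the bookkeeping needed to show that the $\bar c$-term genuinely dominates as $n\to\infty$ so that its freely rotatable argument sweeps every direction.
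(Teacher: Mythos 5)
Your proof is correct and follows essentially the same route as the paper: after disposing of the cases $\psi(0)=0$ and $\phi'(0)=0$ via Lemma~\ref{lem1}, both arguments pass to the two-dimensional compression onto the span of $z^{n}$ and $z^{n+m-1}$ and exploit the linear-in-$n$ growth of the cross term $n\psi(0)a_1^{n-1}a_m+\hat\psi_{m-1}a_1^{n}$ relative to the diagonal --- the paper invokes the elliptical range theorem for this $2\times 2$ compression where you rotate the phase of $c$ by hand, which amounts to the same computation. The only (shared) caveat is that $\psi\equiv 0$ must genuinely be excluded as a hypothesis rather than dismissed as vacuous, since in that case $W(C_{\psi,\phi})=\{0\}$ has empty interior and the conclusion fails.
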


\begin{proof}
	If $\phi^{\prime}(0)=0$ then $\phi$ is not one-to-one and by Lemma \ref{lem1} we have  $0 \in$ $int$ $W(C_{\psi,\phi}).$ We now consider that $\phi^{\prime}(0)=\lambda \neq 0.$ As $\phi$ is not of the form $\phi(z)=tz$ where $t \in \overline{\mathbb{D}},$ so $\phi$ can be written as
	\begin{align*}
		\phi(z)=\lambda z \left( 1+bz^m(1+h(z)) \right),
	\end{align*} 
	where $m$ is a positive integer, $b \neq 0$ and $h \in H(\mathbb{D})$ with $h(0)=0.$ Therefore, for any $n \geq 1$
	\begin{align}\label{Theo2e1}
		\phi^n(z)=\lambda^n z^n+nb\lambda^nz^{n+m}+ \mbox{higher order terms of $z$}.
	\end{align}
	The matrix of $C_{\psi,\phi}$ with respect to the orthonormal basis $\{e_n\}^{\infty}_{n=0}$ has its $n$-th column given by the sequence of coefficients of the power series expansion of $\sqrt{\frac{\Gamma(n+\alpha+2)}{n!\Gamma(\alpha+2)}}\psi\phi^n.$
	Let $M_n$ be the subspace of $L_a^2(dA_{\alpha})$ spanned by $e_n$ and $e_{n+m}.$ Clearly, $M_n$ is a two dimensional subspace of $L_a^2(dA_{\alpha}).$ Let $\sum_{k=0}^{\infty}\hat{\psi}_kz^k$ be the power series expansion of $\psi.$ Then the matrix representation of $C_{\psi,\phi}$ on $M_n$ with respect to the basis $\{e_n,e_{n+m}\}$ is given by
	\begin{align*} 
		\begin{pmatrix}
			\hat{\psi}_0\lambda^n&0\\
			\sqrt{\frac{(m+n)!\Gamma(n+\alpha+2)}{n!\Gamma(m+n+\alpha+2)}}\lambda^n(nb\hat{\psi}_0+\hat{\psi}_m) &\hat{\psi}_0\lambda^{m+n}
		\end{pmatrix}=\lambda^nC_n,
	\end{align*}
	where 
	\begin{align*} 
		C_n=\begin{pmatrix}
			\hat{\psi}_0&0\\
			\sqrt{\frac{(m+n)!\Gamma(n+\alpha+2)}{n!\Gamma(m+n+\alpha+2)}}(nb\hat{\psi}_0+\hat{\psi}_m) &\hat{\psi}_0\lambda^{m}
		\end{pmatrix}.
	\end{align*}
	Since the numerical range of compression is contained in the numerical range of the operator (see \cite[Prop. 1.4]{wu_gau_book}) so it is sufficient to show that $0 \in$ $int$ $W(C_n)$ for some $n.$ If $\hat{\psi}_0=0$ then it follows from Lemma \ref{lem1} that $0 \in$ $int$ $W(C_n).$ If $\hat{\psi}_0 \neq 0$ then $W(C_n)$ is an ellipse with foci $\hat{\psi}_0$ and $\hat{\psi}_0\lambda^{m}$ with the length of the minor axis $\sqrt{\frac{(m+n)!\Gamma(n+\alpha+2)}{n!\Gamma(m+n+\alpha+2)}}|nb\hat{\psi}_0+\hat{\psi}_m|,$ see \cite[Example 3]{GR_BOOK}. A simple computation shows that 
	$$\lim_{n \to \infty}\frac{(m+n)!\Gamma(n+\alpha+2)}{n!\Gamma(m+n+\alpha+2)}=1.$$
	Thus if we choose $n$ large enough then the length of the minor axis of $W(C_n)$ will larger than the modulus of its centre. Hence there exits $n$ for which $0 \in$ $int$ $W(C_n),$ as desired.
\end{proof}

\begin{theorem}\label{Theo3}
	Let $C_{\psi,\phi} \in  \mathbb{B}\left({L_a^2(dA_{\alpha})}\right)$ and $\psi$ be nonconstant. If $\phi(z)=tz$ where $-1\leq t\leq 0,$ then $0 \in$ $int$ $W(C_{\psi,\phi}).$
\end{theorem}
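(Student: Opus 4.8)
The plan is to dispose of the degenerate value $t=0$ first, reduce the case $\psi(0)=0$ to an earlier lemma, and then settle the generic case by producing a two-dimensional convex figure inside $W(C_{\psi,\phi})$ whose interior contains $0$, the figure being built from the diagonal matrix entries and the reproducing-kernel values.

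If $t=0$ then $\phi\equiv 0$ is constant and $C_{\psi,\phi}f=\langle f,k^{\alpha}_0\rangle\psi$ is a rank one operator, so Proposition \ref{pro1} applies with $w=0$. Since $\psi$ is nonconstant it is not a scalar multiple of $k^{\alpha}_0\equiv 1$, so only cases $(ii)$ and $(iii)$ can occur: in case $(ii)$ the numerical range is a disc about the origin of radius $\tfrac{\|\psi\|}{2}>0$, and in case $(iii)$ it is a nondegenerate ellipse having $0$ as a focus, nondegeneracy following from the strict Cauchy--Schwarz inequality $|\psi(0)|=|\langle\psi,k^{\alpha}_0\rangle|<\|\psi\|\,\|k^{\alpha}_0\|=\|\psi\|$. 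Either way $0\in int\,W(C_{\psi,\phi})$. From now on assume $t\in[-1,0)$. If $\hat{\psi}_0=\psi(0)=0$, then $\psi$ vanishes at $0\in\mathbb{D}$ while $\phi(z)=tz$ is a nonconstant self map, so Lemma \ref{lem1} immediately gives $0\in int\,W(C_{\psi,\phi})$. The only case left is $t\in[-1,0)$ together with $\hat{\psi}_0\neq 0$.

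In this case I first read off two points of $W(C_{\psi,\phi})$ from the diagonal of the matrix of $C_{\psi,\phi}$ in the basis $\{e_n\}$: since $C_{\psi,\phi}e_n=t^{\,n}z^{n}\psi(z)\cdot\!\sqrt{\Gamma(n+\alpha+2)/n!\Gamma(\alpha+2)}$, the constant term of $\psi$ contributes $\langle C_{\psi,\phi}e_n,e_n\rangle=\hat{\psi}_0\,t^{\,n}$. Hence $a:=\hat{\psi}_0$ and $b:=t\hat{\psi}_0$ lie in $W(C_{\psi,\phi})$, and because $t<0$ and $\hat{\psi}_0\neq0$ the origin lies in the relative interior of the segment $[a,b]$, which sits on the line $\ell=\mathbb{R}\hat{\psi}_0$ through $0$ and $\hat{\psi}_0$. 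Next, evaluating the numerical range on the normalized reproducing kernels and using that $\phi(w)=tw$ makes $\overline{w}\phi(w)=t|w|^2$ real, the computation behind \eqref{Theo1e1} gives
\[
\langle C_{\psi,\phi}\hat{k}^{\alpha}_w,\hat{k}^{\alpha}_w\rangle=\frac{(1-|w|^2)^{\alpha+2}}{(1-t|w|^2)^{\alpha+2}}\,\psi(w)=\rho(w)\psi(w),\qquad \rho(w)>0 .
\]
Thus every value $\psi(w)$, up to a strictly positive real scaling, belongs to $W(C_{\psi,\phi})$. Since $\psi$ is nonconstant, the open mapping theorem makes $\psi(\mathbb{D})$ open and containing $\psi(0)=\hat{\psi}_0\in\ell$, so it contains points $\psi(w_1),\psi(w_2)$ strictly on the two open sides of $\ell$; as $\rho(w_i)>0$ and $\ell$ passes through $0$, the points $p:=\rho(w_1)\psi(w_1)$ and $q:=\rho(w_2)\psi(w_2)$ remain strictly on opposite sides of $\ell$. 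Now $\{a,b,p,q\}\subseteq W(C_{\psi,\phi})$ spans a genuine two-dimensional convex quadrilateral in which $[a,b]$ is a diagonal, so the relative-interior point $0$ of that diagonal lies in the interior of the quadrilateral; by convexity of $W(C_{\psi,\phi})$ this forces $0\in int\,W(C_{\psi,\phi})$.

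The main obstacle is precisely the upgrade from $0\in W(C_{\psi,\phi})$ (which is immediate from $a,b$) to $0\in int\,W(C_{\psi,\phi})$: a single $2\times 2$ compression need not detect the interior containment, since when $\psi$ is even all the natural two-dimensional compressions degenerate to segments. What unlocks the argument is the reality of $t$, which simultaneously keeps the straddling diagonal entries $\hat{\psi}_0,t\hat{\psi}_0$ on the line $\ell=\mathbb{R}\hat{\psi}_0$ and forces the kernel values to be positive real multiples of $\psi(w)$; pairing the segment through $0$ with off-line kernel values on each side of $\ell$ supplies the needed two-dimensional spread.
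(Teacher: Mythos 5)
Your proof is correct, and for the main case it takes a genuinely different route from the paper's. You agree with the paper on the reductions: $t=0$ via Proposition \ref{pro1} (you in fact supply the nondegeneracy detail for case $(iii)$ that the paper leaves implicit), and $\psi(0)=0$, $t\in[-1,0)$ via Lemma \ref{lem1}. You also agree on the two ``on-line'' points, since your diagonal entries $\hat{\psi}_0$ and $t\hat{\psi}_0$ are exactly the paper's $\langle C_{\psi,\phi}e_0,e_0\rangle=1$ and $\langle C_{\psi,\phi}e_1,e_1\rangle=t$ after normalizing $\psi(0)=1$. The divergence is in how the two off-line points are produced. The paper splits $\psi=1+\eta$ with $\eta(0)=0$, verifies that $C_{\eta,\phi}$ is bounded via the integral criterion \eqref{E1B1}, applies Lemma \ref{lem1} to $C_{\eta,\phi}$ to get a disc about $0$ in $W(C_{\eta,\phi})$, and then uses the self-adjointness of $C_{\phi}$ (real diagonal) to push one numerical-range value into each open half-plane. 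You instead evaluate on normalized reproducing kernels, observe that the reality of $t$ makes $\langle C_{\psi,\phi}\hat{k}^{\alpha}_w,\hat{k}^{\alpha}_w\rangle=\rho(w)\psi(w)$ with $\rho(w)>0$, and invoke the open mapping theorem to find $\psi$-values strictly on both sides of the line $\mathbb{R}\hat{\psi}_0$; positivity of $\rho$ and the fact that the line passes through the origin preserve the sides. Your version avoids both the operator splitting and the boundedness verification for $C_{\eta,\phi}$, so it is somewhat more self-contained; the paper's version has the advantage of reusing Lemma \ref{lem1} uniformly and of producing a quantitative disc inside $W(C_{\eta,\phi})$ rather than just two points. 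Both arguments then conclude identically by taking the convex hull of four points straddling $0$.
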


\begin{proof}
	First we prove this result for $\psi(0)=0.$ For $-1\leq t< 0$ the result follows from Lemma \ref{lem1} and for $t=0$ the Proposition \ref{pro1} implies that $0 \in$ $int$ $W(C_{\psi,\phi}).$
	
	Now to prove the result for $\psi(0)\neq 0$ it is enough to show this for $\psi(0)=1.$
	So, $\psi(z)=1+\eta(z),$ where $\eta$ is a nonconstant analytic function with $\eta(0)=0.$
	Now, we show that $C_{\eta,\phi} \in  \mathbb{B}\left({L_a^2(dA_{\alpha})}\right).$ Since $\eta(z)=\psi(z)-1$ so we get
	$$|\eta(z)|^2 \leq (|\psi(z)|+1)^2 \leq 2(|\psi(z)|^2+1).$$
	This implies that 
	\begin{align}\label{Theo3E1}
	I_{\phi,\alpha}(\eta)(a) \leq 2 \left(I_{\phi,\alpha}(\psi)(a)+\int_{\mathbb{D}}\left(\frac{1-|a|^2}{|1-\bar{a}\phi(w)|^2}\right)^{\alpha+2}dA_{\alpha}(w)\right),
	\end{align}
	where $$I_{\phi,\alpha}(\eta)(a)=\int_{\mathbb{D}}\left(\frac{1-|a|^2}{|1-\bar{a}\phi(w)|^2}\right)^{\alpha+2}|\eta(w)|^2 dA_{\alpha}(w)$$ 
	and $$I_{\phi,\alpha}(\psi)(a)=\int_{\mathbb{D}}\left(\frac{1-|a|^2}{|1-\bar{a}\phi(w)|^2}\right)^{\alpha+2}|\psi(w)|^2 dA_{\alpha}(w).$$
	As $C_{\psi,\phi} \in  \mathbb{B}\left({L_a^2(dA_{\alpha})}\right)$ so by \eqref{E1B1} and from \eqref{Theo3E1} we obtain $\sup_{a \in \mathbb{D}}I_{\eta, \alpha}(\psi)(a) <\infty.$ Thus we have $C_{\eta,\phi} \in  \mathbb{B}\left({L_a^2(dA_{\alpha})}\right).$
	Now, for any $f \in L_a^2(dA_{\alpha})$ with $\|f\|=1,$ we get
	\begin{align*}
		\langle C_{\psi,\phi}f,f \rangle=\langle C_{\phi}f,f \rangle+\langle C_{\eta,\phi}f,f \rangle.
	\end{align*}
	For $t=0$ then the result follows from Proposition \ref{pro1}. Since $\eta(0)=0,$ for $-1 \leq t <0,$ it follows from Lemma \ref{lem1} that $W(C_{\eta,\phi})$ contains a disc of positive radius and centred at the origin. So there exists $f_1 \in L_a^2(dA_{\alpha})$ with $\|f_1\|=1$ such that $Im \langle C_{\eta,\phi}f_1,f_1 \rangle>0.$ As $\langle C_{\phi}f_1,f_1 \rangle$ is real so $p_1=\langle C_{\psi,\phi}f_1,f_1 \rangle$ is in the upper half plane. Similarly, we get another point $p_2$ in the lower half plane. Again we have $\langle C_{\psi,\phi}e_1,e_1 \rangle=t$ and $\langle C_{\psi,\phi}e_0,e_0 \rangle=1.$ Thus $0 \in$ $int$ $\{p_1,p_2,t,1\}^{\wedge} \subseteq$ $int$ $W(C_{\psi,\phi}),$ as desired.
\end{proof}

\begin{remark}
	$(i)$ If $\psi$ is constant and $\phi(z)=tz$ where $-1\leq t <1,$ then $W(C_{\psi,\phi})$ is a line segment of $\mathbb{C}$ and thus $0 \notin $ $int$ $W(C_{\psi,\phi}).$\\
	$(ii)$ If $\psi$ is nonconstant with $\psi(0)=0$ and $\phi(z)=tz$ where $0<t<1,$ then by Lemma \ref{lem1} we have $0 \in$ $int$ $W(C_{\psi,\phi}).$\\
	$(iii)$ If $\psi$ is nonconstant with $\psi(0)\neq 0$ and $\phi(z)=tz$ where $0<t<1,$ then the following two cases are possible.\\
	 $(a)$ Let $\psi$ be such that $\psi(0)\neq 0$ but $\psi(w_0)=0$ for some $w_0\in \mathbb{D}.$ Then by applying Lemma \ref{lem1} we conclude that $0 \in$ $int$ $W(C_{\psi,\phi}).$\\
	 $(b)$ Now, we consider $\phi(z)=z/2$ and $\psi(z)=1+z/4.$ Let $f \in L_a^2(dA_{\alpha})$ with $\|f\|=1$ and $f(z)=\sum_{k=0}^{\infty}\hat{f}_kz^k$ be the power series expansion. Then we have
	 \begin{align*}
	    \langle  C_{\psi,\phi}f,f \rangle&=\sum_{k=0}^{\infty}\frac{k!\Gamma(\alpha+2)}{2^k\Gamma(k+\alpha+2)}|\hat{f}_k|^2+\frac{1}{4}\sum_{k=0}^{\infty}\frac{(k+1)!\Gamma(\alpha+2)}{2^k\Gamma(k+\alpha+3)}\hat{f}_{k}\bar{\hat{f}}_{k+1}\\
	    &=\gamma+\frac{1}{4}\delta,
	 \end{align*}
     where $\gamma=\sum_{k=0}^{\infty}\frac{k!\Gamma(\alpha+2)}{2^k\Gamma(k+\alpha+2)}|\hat{f}_k|^2$ and $\delta=\sum_{k=0}^{\infty}\frac{(k+1)!\Gamma(\alpha+2)}{2^k\Gamma(k+\alpha+3)}\hat{f}_{k}\bar{\hat{f}}_{k+1}.$\\
     Since $|\hat{f}_{k}\bar{\hat{f}}_{k+1}|\leq \frac{1}{2}\left(|\hat{f}_{k}|^2+|\hat{f}_{k+1}|^2\right)$ and a by simple computation we have 
     \begin{align*}
     	|\delta| \leq \frac{3}{2}\gamma  \,\,\,\,\mbox{as $\gamma>0.$}
     \end{align*}
     Therefore $\gamma+\frac{1}{4}\delta \neq 0$ and this implies that $0 \notin W(C_{\psi,\phi}).$
\end{remark}

Finally, we introduce some sufficient condition for the closedness of numerical range of compact weighted composition operators. The results follows from the previous results of this section and the well known result that if the numerical range of a compact operator contains $0$ then it is closed, see \cite[p. 115]{H_BOOK}.

\begin{cor}\label{c11}
	Let $C_{\psi,\phi}$ be a compact operator on  $L_a^2(dA_{\alpha}).$ Then $W(C_{\psi,\phi})$ is closed if one of the following condition holds:\\
	(i)~~$\phi$ is the identity map on $\mathbb{D}$ and $\psi$ has a zero on $\mathbb{D}.$\\
	(ii)~~$\psi \neq 0$ and either $\psi$ has a zero on $\mathbb{D}$ or $\phi$ is not one-to-one.\\
	(iii)~~$\phi(0)=0$ and $\phi$ is not of the form $\phi(z)=tz$ for $t \in \overline{\mathbb{D}}$.\\
	(iv)~~$\psi$ is nonconstant and $\phi(z)=tz,$ $-1\leq t\leq 0.$
\end{cor}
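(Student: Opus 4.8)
The plan is to reduce all four statements to the single fact quoted just above the corollary: the numerical range of a compact operator that contains the origin is closed \cite[p.~115]{H_BOOK}. Since $C_{\psi,\phi}$ is assumed compact in every case, it therefore suffices to verify that $0 \in W(C_{\psi,\phi})$ under each of the hypotheses (i)--(iv); the closedness of $W(C_{\psi,\phi})$ then follows immediately.

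Two of the cases are read off directly from the results of this section. Under hypothesis (iii) the origin lies in the interior of $W(C_{\psi,\phi})$ by Theorem~\ref{Theo2}, and under hypothesis (iv) the same conclusion is precisely Theorem~\ref{Theo3}; in both situations $0 \in W(C_{\psi,\phi})$. For hypothesis (i), where $\phi$ is the identity and hence $C_{\psi,\phi} = M_{\psi}$, I would combine the identity $M_{\psi}^{*}k^{\alpha}_w = \overline{\psi(w)}k^{\alpha}_w$ with the reproducing property to obtain $\langle M_{\psi}\hat{k}^{\alpha}_w, \hat{k}^{\alpha}_w\rangle = \psi(w)$ for every $w \in \mathbb{D}$; evaluating at a zero $w_0$ of $\psi$ then gives $0 = \psi(w_0) \in W(C_{\psi,\phi})$. (If $\psi \equiv 0$ the operator is $0$ and $W(C_{\psi,\phi}) = \{0\}$ is trivially closed.)

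The only case that requires a genuine split is (ii), according to whether $\phi$ is constant. If $\phi$ is nonconstant, the hypothesis there, namely $\psi \neq 0$ together with either a zero of $\psi$ in $\mathbb{D}$ or a failure of injectivity of $\phi$, is exactly the hypothesis of Lemma~\ref{lem1}, which yields $0$ in the interior of $W(C_{\psi,\phi})$. If instead $\phi \equiv w$ is constant, then $\phi$ is not one-to-one and $C_{\psi,\phi}$ is the rank-one operator described in Proposition~\ref{pro1}; each of the three possible shapes there, namely the segment $[0,\bar{\mu}\|\psi\|^2]$, a disc centred at the origin, and an ellipse with a focus at $0$, contains the origin, so once more $0 \in W(C_{\psi,\phi})$.

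I do not expect a serious obstacle here, since the analytic content lives entirely in the earlier results and the care required is organisational. Specifically, I must ensure that each hypothesis genuinely forces $0 \in W(C_{\psi,\phi})$ and not merely $0 \in \overline{W(C_{\psi,\phi})}$, because for a compact operator the origin already belongs to the closure (it lies in the spectrum), so it is membership in $W(C_{\psi,\phi})$ itself that is decisive. Verifying that the constant-symbol subcase of (ii) is covered by Proposition~\ref{pro1} is the subtlest bookkeeping step.
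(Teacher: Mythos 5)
Your proposal is correct and follows exactly the route the paper intends: the paper's own ``proof'' is the single remark preceding the corollary, namely that each hypothesis forces $0\in W(C_{\psi,\phi})$ by the earlier results of the section, whence closedness follows from the fact that the numerical range of a compact operator containing the origin is closed. Your only addition is to spell out the details the paper leaves implicit, in particular the split in case (ii) between nonconstant $\phi$ (Lemma~\ref{lem1}) and constant $\phi$ (Proposition~\ref{pro1}), which is a genuine gap in the paper's one-line justification that you correctly identify and close.
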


\begin{example}
	If we consider the example $\psi(z)=z^2$ and $\phi(z)=az^2$ with $|a|<1$ then $\phi$ is not of the form $\phi(z)=tz$ for $t \in \overline{\mathbb{D}},$ and $\phi(0)=0.$ Again from \cite[Cor. 1]{M_JFA_2005} it follows that $C_{\psi,\phi}$ is compact on  $L_a^2(dA_{\alpha}).$ Thus for this example $W(C_{\psi,\phi})$ is closed.
\end{example}

\section{Containment of circle or ellipse in the numerical range}\label{S3}

In the previous section we study about the containment of the origin in the interior of numerical range. Our next focus is to investigate the weighted composition operators for which the numerical range contains a circular disc or ellipse and accordingly we find the radius of the disc or lengths of the major and minor axis.

\begin{theorem}\label{TH1}
	Let	$C_{\psi,\phi} \in  \mathbb{B}\left({L_a^2(dA_{\alpha})}\right)$ be such that $\phi(0)=0$ and $\psi$ has a zero of order $m>0$ at the origin. If $\hat{\psi}_m$ denotes the $m$-th Taylor coefficient of $\psi,$ then $W(C_{\psi,\phi})$ contains the disc of radius $\frac{m!\Gamma(\alpha+2)}{\Gamma(m+\alpha+2)+m!\Gamma(\alpha+2)}|\hat{\psi}_m|$, centred at the origin.
\end{theorem}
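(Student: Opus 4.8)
The plan is to exhibit the entire boundary circle of the claimed disc inside $W(C_{\psi,\phi})$ by evaluating the quadratic form on a one-parameter family of unit vectors, and then to fill in the interior by convexity. Write $\beta_m=\|z^m\|^2=\frac{m!\Gamma(\alpha+2)}{\Gamma(m+\alpha+2)}$, so that after dividing numerator and denominator by $\Gamma(m+\alpha+2)$ the target radius becomes exactly $\frac{\beta_m}{1+\beta_m}|\hat\psi_m|$. Since the monomials $1$ and $z^m$ are orthogonal, $\|1+e^{i\gamma}z^m\|^2=1+\beta_m$ for every real $\gamma$, and hence the functions $f_\gamma=\frac{1+e^{i\gamma}z^m}{\sqrt{1+\beta_m}}$ are unit vectors in $L_a^2(dA_{\alpha})$.

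The computational heart of the argument is to evaluate $\langle C_{\psi,\phi}f_\gamma,f_\gamma\rangle$. Using $C_{\psi,\phi}(1+e^{i\gamma}z^m)=\psi\cdot(1+e^{i\gamma}\phi^m)=\psi+e^{i\gamma}\psi\phi^m$ (where $\phi^m$ denotes the $m$-th power of $\phi$) and expanding against $1+e^{i\gamma}z^m$, exactly four terms arise:
\begin{align*}
\langle\psi,1\rangle,\qquad e^{-i\gamma}\langle\psi,z^m\rangle,\qquad e^{i\gamma}\langle\psi\phi^m,1\rangle,\qquad \langle\psi\phi^m,z^m\rangle.
\end{align*}
Three of these vanish. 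Because $\psi$ has a zero of order $m>0$ at the origin we have $\hat\psi_0=0$, so $\langle\psi,1\rangle=0$, while $\langle\psi,z^m\rangle=\hat\psi_m\beta_m$. Because $\phi(0)=0$, the power $\phi^m$ vanishes to order at least $m$, so $\psi\phi^m$ vanishes to order at least $2m>m$; therefore both its constant and its $m$-th Taylor coefficients are $0$, which forces $\langle\psi\phi^m,1\rangle=0$ and $\langle\psi\phi^m,z^m\rangle=0$. Collecting what survives gives
\begin{align*}
\langle C_{\psi,\phi}f_\gamma,f_\gamma\rangle=\frac{e^{-i\gamma}\hat\psi_m\beta_m}{1+\beta_m}.
\end{align*}

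As $\gamma$ ranges over $[0,2\pi)$ the phase $e^{-i\gamma}$ sweeps the full unit circle, so these values trace out the entire circle of radius $\frac{\beta_m}{1+\beta_m}|\hat\psi_m|$ centred at the origin, and this circle lies in $W(C_{\psi,\phi})$. Since $W(C_{\psi,\phi})$ is convex and the convex hull of a circle centred at the origin is the corresponding closed disc (in particular the centre $0$, being the midpoint of two antipodal boundary points, belongs to $W(C_{\psi,\phi})$), the whole closed disc of that radius is contained in $W(C_{\psi,\phi})$, which is the assertion.

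I expect the only genuine obstacle to be the bookkeeping in the second step — namely, verifying cleanly that the two terms involving $\psi\phi^m$ vanish; this is exactly where the order-of-vanishing hypotheses on $\psi$ and $\phi$ enter, and it is worth stressing that nothing about $\phi$ beyond $\phi(0)=0$ is needed. I would also note that the symmetric choice $|\lambda|=1$ in the family $1+\lambda z^m$ is not optimal: optimizing over $|\lambda|$ (equivalently, compressing $C_{\psi,\phi}$ to $\mathrm{span}\{1,z^m\}$, a block of nilpotent type whose numerical range is a disc) would yield the strictly larger radius $\tfrac12\sqrt{\beta_m}\,|\hat\psi_m|$, since $1+\beta_m\ge 2\sqrt{\beta_m}$; the constant in the statement corresponds to the unoptimised test family.
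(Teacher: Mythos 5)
Your proposal is correct and follows essentially the same route as the paper: the authors also test the quadratic form on the normalized family $\lambda+z^{m}$ with $|\lambda|=1$ (the same vectors as your $1+e^{i\gamma}z^{m}$ up to a unimodular factor), observe that only the cross term $\langle\psi,z^{m}\rangle=\beta_{m}\hat{\psi}_{m}$ survives because $\hat\psi_0=0$ and $\psi\phi^{m}$ vanishes to order at least $2m$, and conclude by sweeping $\lambda$ over the unit circle. Your closing observation that compressing to $\mathrm{span}\{1,z^{m}\}$ yields the larger radius $\tfrac12\sqrt{\beta_{m}}\,|\hat{\psi}_{m}|$ is a correct and worthwhile improvement, but it is an aside rather than a different proof of the stated bound.
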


\begin{proof}
	Let us consider $f(z)=\sqrt{\frac{\Gamma(m+\alpha+2)}{\Gamma(m+\alpha+2)+m!\Gamma(\alpha+2)}}(\lambda+z^m)$ for all $z \in \mathbb{D}$ with $|\lambda|=1.$ Then $f \in L_a^2(dA_{\alpha})$ with $\|f\|=1.$ Let $\sum_{k=1}^{\infty}\hat{\phi}_kz^k$ and $\sum_{k=m}^{\infty}\hat{\psi}_kz^k$ be the Taylor series of $\phi$ and $\psi,$ respectively. Then
	\begin{align*}
		f(\phi(z))&=\sqrt{\frac{\Gamma(m+\alpha+2)}{\Gamma(m+\alpha+2)+m!\Gamma(\alpha+2)}}
		\left( \lambda + \left(\sum_{k=1}^{\infty}\hat{\phi}_kz^k\right)^m\right)\\
		&=\sqrt{\frac{\Gamma(m+\alpha+2)}{\Gamma(m+\alpha+2)+m!\Gamma(\alpha+2)}}
		\left( \lambda + \hat{\phi}_1^mz^m+ \mbox{higher order terms of $z$} \right).
	\end{align*}
	Thus we have
	\begin{align*}
		& \langle C_{\psi,\phi}f,f \rangle\\
		&= \langle \psi(z)f(\phi(z)),f(z) \rangle \\
		& = \frac{\Gamma(m+\alpha+2)}{\Gamma(m+\alpha+2)+m!\Gamma(\alpha+2)} \left \langle  \left( \sum_{k=m}^{\infty}\hat{\psi}_kz^k \right)\left( \lambda + \hat{\phi}_1^mz^m+ \mbox{higher order terms of $z$} \right),\lambda+z^m\right\rangle \\
		&=\frac{m!\Gamma(\alpha+2)}{\Gamma(m+\alpha+2)+m!\Gamma(\alpha+2)}\lambda \hat{\psi}_m.
	\end{align*} 
	Since $\lambda$ is an arbitrary complex number with $|\lambda|=1$ so $W(C_{\psi,\phi})$ contains the disc of radius $\frac{m!\Gamma(\alpha+2)}{\Gamma(m+\alpha+2)+m!\Gamma(\alpha+2)}|\hat{\psi}_m|$ and centre at the origin.
\end{proof}

\begin{theorem}\label{TH2}
	Let	$C_{\psi,\phi} \in  \mathbb{B}\left({L_a^2(dA_{\alpha})}\right)$ be such that $\phi(z)=\lambda z$ with $\lambda \neq 0$ and $\psi(z)=\sum_{k=1}^{\infty}\hat{\psi}_kz^k.$ Then  for all $m\geq 2,$ $W(C_{\psi,\phi})$ contains the disc of radius $\frac{1}{2}\sqrt{\frac{m!\Gamma(\alpha+3)}{\Gamma(m+\alpha+2)}}|\lambda \hat{\psi}_{m-1}|$ and centre at the origin.
\end{theorem}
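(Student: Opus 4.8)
The plan is to adapt the compression argument from the proof of Theorem \ref{Theo2}: I will restrict $C_{\psi,\phi}$ to a carefully chosen two-dimensional subspace, identify the resulting $2\times 2$ matrix, and then invoke the inclusion $W(C_{\psi,\phi}|_{M})\subseteq W(C_{\psi,\phi})$ for the compression to a subspace $M$ (see \cite[Prop. 1.4]{wu_gau_book}). Since $\phi(z)=\lambda z$, one computes $C_{\psi,\phi}e_j=\psi\,(e_j\circ\phi)=\lambda^j\,\psi\, e_j$, so the first step is to expand $\lambda^j\psi e_j$ in the orthonormal basis $\{e_n\}$ by means of $z^{p}=\sqrt{p!\,\Gamma(\alpha+2)/\Gamma(p+\alpha+2)}\,e_{p}$ and read off the matrix entries $\langle C_{\psi,\phi}e_j,e_{j+k}\rangle$.

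The key choice is the subspace $M=\mathrm{span}\{e_1,e_m\}$, which is genuinely two-dimensional for $m\ge 2$. Because $\psi(z)=\sum_{k\ge 1}\hat\psi_k z^{k}$ vanishes at the origin, the lowest power of $z$ occurring in $C_{\psi,\phi}e_j$ is $z^{j+1}$. Hence $\langle C_{\psi,\phi}e_1,e_1\rangle=0$ and $\langle C_{\psi,\phi}e_m,e_m\rangle=0$, while $\langle C_{\psi,\phi}e_m,e_1\rangle=0$ because the lowest power $z^{m+1}$ exceeds $z^{1}$; thus the compression of $C_{\psi,\phi}$ to $M$ in the ordered basis $(e_1,e_m)$ is the strictly lower triangular matrix
\begin{equation*}
	N=\begin{pmatrix} 0 & 0 \\ c & 0 \end{pmatrix},\qquad c=\langle C_{\psi,\phi}e_1,e_m\rangle .
\end{equation*}
Extracting the coefficient of $z^{m}$ (the case $k=m-1$) from $\lambda\,\psi e_1$ and renormalizing, together with the identity $(\alpha+2)\Gamma(\alpha+2)=\Gamma(\alpha+3)$, will give
\begin{equation*}
	c=\lambda\,\hat\psi_{m-1}\sqrt{\frac{m!\,\Gamma(\alpha+3)}{\Gamma(m+\alpha+2)}} .
\end{equation*}

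Finally, for a unit vector $v=(x,y)^{T}$ one has $\langle Nv,v\rangle=c\,x\bar y$, and since $\max_{|x|^2+|y|^2=1}|x\bar y|=\tfrac12$ with the argument of $x\bar y$ unrestricted, $W(N)$ is exactly the closed disc of radius $|c|/2$ centred at the origin. Combining this with $W(N)\subseteq W(C_{\psi,\phi})$ yields the claimed disc of radius $\tfrac12\sqrt{m!\,\Gamma(\alpha+3)/\Gamma(m+\alpha+2)}\,|\lambda\hat\psi_{m-1}|$. I expect the only delicate point to be the verification that the three remaining entries of the compression vanish: this is precisely where both the hypothesis $\psi(0)=0$ and the restriction $m\ge 2$ enter, and it is what forces $N$ to be purely nilpotent so that its numerical range is a centred disc rather than a nondegenerate ellipse.
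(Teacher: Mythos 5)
Your proposal is correct and follows essentially the same route as the paper: the paper also compresses $C_{\psi,\phi}$ to $\mathrm{span}\{e_1,e_m\}$, obtains exactly the strictly lower triangular matrix with entry $\lambda\hat{\psi}_{m-1}\sqrt{m!\,\Gamma(\alpha+3)/\Gamma(m+\alpha+2)}$, and concludes via the compression inclusion. The only cosmetic difference is that you verify the vanishing of the other three entries and compute $W(N)$ by hand, whereas the paper cites a standard reference for the numerical range of a $2\times 2$ nilpotent matrix.
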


\begin{proof}
	For $m \geq 2$ let $M_m$ be the subspace of $L_a^2(dA_{\alpha})$ spanned by $e_1$ and $e_m.$ Now, we have
	\begin{align*}
		C_{\psi,\phi}e_1(z)=\lambda \sqrt{\alpha+2}\left( \sum_{k=1}^{\infty}\hat{\psi}_kz^{k+1} \right)
	\end{align*}
	and
	\begin{align*}
		C_{\psi,\phi}e_m(z)=\lambda^m \sqrt{\frac{\Gamma(m+\alpha+2)}{m!\Gamma(\alpha+2)}}\left( \sum_{k=1}^{\infty}\hat{\psi}_kz^{k+m} \right).
	\end{align*}
	Thus the matrix representation of the compression of $C_{\psi,\phi}$ to $M_m$ has the matrix representation
	\begin{align*} 
		\begin{pmatrix}
			0&0\\
			\lambda \sqrt{\frac{m!\Gamma(\alpha+3)}{\Gamma(m+\alpha+2)}}\hat{\psi}_{m-1} &0
		\end{pmatrix}.
	\end{align*}
	Therefore, the numerical range of the compression of $C_{\psi,\phi}$ to $M_m$ is a closed disc centred at the origin and radius $\frac{1}{2}\sqrt{\frac{m!\Gamma(\alpha+3)}{\Gamma(m+\alpha+2)}}|\lambda \hat{\psi}_{m-1}|,$ see \cite[Example 3]{GR_BOOK}. Thus $W(C_{\psi,\phi})$ contains the disc of radius $\frac{1}{2}\sqrt{\frac{m!\Gamma(\alpha+3)}{\Gamma(m+\alpha+2)}}|\lambda \hat{\psi}_{m-1}|$ and centre at the origin.
\end{proof}

\begin{remark}
	Here we remark that the case $m=0$ cannot be included in Theorem \ref{TH1} and Theorem \ref{TH2}. If we consider $\psi(z)=1+\epsilon z$ and $\phi(z)=-z/2$ with $\epsilon>0$ then it is easy to observe that the radius of the largest circle centred at the origin and contained in $W(C_{\psi,\phi})$ is at most $\epsilon.$	
\end{remark}

\begin{theorem}
	Let	$C_{\psi,\phi} \in  \mathbb{B}\left({L_a^2(dA_{\alpha})}\right)$ be such that $\phi(z)=e^{2\pi i/n} z$ and $\psi(z)=\sum_{k=0}^{\infty}\hat{\psi}_kz^k.$ If $m_1,m_2$ are two positive integers with $m_2>m_1$ and $\hat{\psi}_{nm_1}\hat{\psi}_{nm_2}\hat{\psi}_{n(m_1-m_2)}=0$ but all of the three terms $\hat{\psi}_{nm_1},\hat{\psi}_{nm_2}$ and $\hat{\psi}_{n(m_1-m_2)}$ are not equal to zero. Then $W(C_{\psi,\phi})$ contains the circle centred at $1$ with radius 
	$$\frac{1}{2}\sqrt{\frac{(nm_2)!\Gamma(\alpha+2)}{\Gamma(nm_2+\alpha+2)}\left(c|\hat{\psi}_{nm_1}|^2+|\hat{\psi}_{n(m_1-m_2)}|^2+\frac{1}{c}|\hat{\psi}_{nm_2}|^2\right)},$$ 
	where $c= \frac{(nm_1)!\Gamma(nm_2+\alpha+2)}{(nm_2)!\Gamma(nm_1+\alpha+2)}.$
\end{theorem}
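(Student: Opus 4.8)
The plan is to realise the desired circle as the boundary of a disc obtained as the numerical range of a $3\times 3$ compression of $C_{\psi,\phi}$. Since every index occurring below is a multiple of $n$ and $\lambda=e^{2\pi i/n}$ satisfies $\lambda^{nk}=1$, the vectors $e_0,e_{nm_1},e_{nm_2}$ lie in the part of $L_a^2(dA_{\alpha})$ on which the composition acts trivially, so that $C_{\psi,\phi}e_{nk}=c_{nk}\,\psi\, z^{nk}$, where $c_{nk}=\sqrt{\Gamma(nk+\alpha+2)/((nk)!\,\Gamma(\alpha+2))}$. First I would compute, using the orthogonality of the monomials and the norms $\|z^{nl}\|^2=c_{nl}^{-2}$, the entries
\[
\langle C_{\psi,\phi}e_{nk},e_{nl}\rangle=\frac{c_{nk}}{c_{nl}}\,\hat{\psi}_{n(l-k)}\qquad(l\ge k),
\]
and zero for $l<k$. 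Restricting to $M=\mathrm{span}\{e_0,e_{nm_1},e_{nm_2}\}$, the compression is then $\hat{\psi}_0 I+N$: the common diagonal value $\hat{\psi}_0$ provides the centre of the disc (the stated centre $1$ under the normalisation $\hat{\psi}_0=1$), and $N$ is the strictly lower-triangular nilpotent with the three entries
\[
a=\tfrac{c_0}{c_{nm_1}}\hat{\psi}_{nm_1},\quad b=\tfrac{c_{nm_1}}{c_{nm_2}}\hat{\psi}_{n(m_2-m_1)},\quad d=\tfrac{c_0}{c_{nm_2}}\hat{\psi}_{nm_2}.
\]

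Next I would invoke the hypothesis. The assumption that the product $\hat{\psi}_{nm_1}\hat{\psi}_{n(m_2-m_1)}\hat{\psi}_{nm_2}$ vanishes while the three factors are not all zero forces exactly one of $a,b,d$ to be zero and the other two to be genuine; this is precisely what collapses $N$ into a shape whose numerical range is a centred disc. If the long entry $d$ (the gap-$m_2$ term) vanishes, then $N$ is a two-step weighted shift $e_0\mapsto a\,e_{nm_1}\mapsto b\,e_{nm_2}\mapsto 0$; if instead $a=0$ or $b=0$, then $N$ has rank one and is nilpotent, i.e.\ $N=uv^{*}$ with $u\perp v$.

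In the rank-one case the numerical range of $N=uv^{*}$ with $u\perp v$ is the closed disc centred at the origin of radius $\tfrac12\|u\|\,\|v\|=\tfrac12\sqrt{|a|^2+|b|^2+|d|^2}$ (the vanishing entry simply drops out), which is the degenerate circular case of the rank-one picture used in \cite[Prop. 2.5]{BS_IEOT_2002} and \cite[Example 3]{GR_BOOK}. In the weighted-shift case I would first establish circular symmetry by conjugating with the diagonal unitary $\mathrm{diag}(1,e^{i\theta},e^{2i\theta})$, which rotates $\langle Nx,x\rangle$, and then maximise $|\langle Nx,x\rangle|=|a\,x_0\bar{x}_1+b\,x_1\bar{x}_2|$ over unit vectors: writing $s_j=|x_j|$ and applying Cauchy--Schwarz to $|a|s_0+|b|s_2$ gives numerical radius $\tfrac12\sqrt{|a|^2+|b|^2}=\tfrac12\sqrt{|a|^2+|b|^2+|d|^2}$. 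Thus in every case $W(\hat{\psi}_0 I+N)$ is the closed disc centred at $\hat{\psi}_0$ of radius $\tfrac12\sqrt{|a|^2+|b|^2+|d|^2}$; substituting the values of the constants $c_{nk}$ expresses this radius through the Gamma-quotients $c$ and $(nm_2)!\Gamma(\alpha+2)/\Gamma(nm_2+\alpha+2)$ appearing in the statement. Finally, since the numerical range of a compression is contained in that of the operator (\cite[Prop. 1.4]{wu_gau_book}), $W(C_{\psi,\phi})$ contains this disc, and in particular the asserted circle.

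The main obstacle is the weighted-shift case: showing that its numerical range is \emph{exactly} a centred disc, and pinning down the radius, requires the rotational-invariance together with the Cauchy--Schwarz optimisation above, rather than a direct appeal to the $2\times 2$ ellipse description used in the earlier theorems. A secondary, purely bookkeeping difficulty is tracking the normalisations $c_{nk}$ so that the three case-radii genuinely coincide with the single displayed expression; here care is needed with the gap index $n(m_2-m_1)$ and with verifying that the zeroed entry is the one excluded by the product hypothesis.
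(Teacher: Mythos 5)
Your proposal follows the same route as the paper's proof up to the decisive step: both compress $C_{\psi,\phi}$ to $M=\mathrm{span}\{e_0,e_{nm_1},e_{nm_2}\}$, use $\lambda^{nk}=1$ to see that the compression is $\hat{\psi}_0I+N$ with $N$ strictly lower triangular, and then conclude that the numerical range of the compression is a closed disc. The difference is how that conclusion is reached: the paper simply invokes the Keeler--Rodman--Spitkovsky classification \cite[Th.~4.1]{KRS_LAA_1997} of numerical ranges of $3\times3$ matrices (a nilpotent perturbation of a scalar whose three subdiagonal entries have zero product has a circular disc of radius $\tfrac12\sqrt{|a|^2+|b|^2+|d|^2}$ as numerical range), whereas you reprove exactly this special case by hand, splitting into the rank-one nilpotent case ($N=uv^*$ with $u\perp v$) and the two-step weighted-shift case, where the conjugation by $\mathrm{diag}(1,e^{i\theta},e^{2i\theta})$ gives $U^*NU=e^{-i\theta}N$ and hence rotational symmetry, and Cauchy--Schwarz gives the numerical radius $\tfrac12\sqrt{|a|^2+|b|^2}$. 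Your argument for that step is correct and self-contained; what it buys is independence from the $3\times3$ classification at the cost of a short case analysis. One small imprecision: the hypothesis forces \emph{at least} one of $a,b,d$ to vanish, not exactly one; if two vanish you still land in one of your rank-one cases, so nothing breaks, but the phrasing should be adjusted. You also correctly flag two slips in the statement that the paper glosses over: the index should be $n(m_2-m_1)$ rather than $n(m_1-m_2)$, and the centre is $\hat{\psi}_0$, which equals $1$ only under the normalisation $\hat{\psi}_0=1$.

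The bookkeeping you defer at the end is, however, not innocuous, and you should carry it out. With $c_{nk}=\sqrt{\Gamma(nk+\alpha+2)/((nk)!\,\Gamma(\alpha+2))}$ and $K=(nm_2)!\,\Gamma(\alpha+2)/\Gamma(nm_2+\alpha+2)$ one gets $|a|^2=c_{nm_1}^{-2}|\hat{\psi}_{nm_1}|^2=Kc\,|\hat{\psi}_{nm_1}|^2$, $|d|^2=c_{nm_2}^{-2}|\hat{\psi}_{nm_2}|^2=K|\hat{\psi}_{nm_2}|^2$, and $|b|^2=(c_{nm_1}^2/c_{nm_2}^2)|\hat{\psi}_{n(m_2-m_1)}|^2=\tfrac1c|\hat{\psi}_{n(m_2-m_1)}|^2$, so your (correct) radius is
\begin{equation*}
\frac12\sqrt{K\Bigl(c\,|\hat{\psi}_{nm_1}|^2+\tfrac{1}{Kc}\,|\hat{\psi}_{n(m_2-m_1)}|^2+|\hat{\psi}_{nm_2}|^2\Bigr)},
\end{equation*}
which does \emph{not} literally agree with the displayed expression in the statement (there the weights $1$ and $1/c$ are attached to the middle and last terms). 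A spot check with $\alpha=0$, $n=1$, $m_1=1$, $m_2=2$ confirms the mismatch, and also shows that the paper's own $(3,2)$ matrix entry is off by a factor of $\sqrt{K}$. So your method is sound and yields the correct radius $\tfrac12\sqrt{|a|^2+|b|^2+|d|^2}$, but you cannot expect the final substitution to reproduce the statement verbatim; you need to either correct the displayed constants or note the discrepancy explicitly rather than asserting that the expressions coincide.
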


\begin{proof}
	Let $M$ be the subspace of $L_a^2(dA_{\alpha})$ spanned by $e_0, e_{nm_1}$ and $e_{nm_2}.$ Then we have 
	\begin{align*}
		C_{\psi,\phi}e_0(z)=\sum_{k=0}^{\infty}\hat{\psi}_kz^{k},
	\end{align*}
	\begin{align*}
		C_{\psi,\phi}e_{nm_1}(z)=\sqrt{\frac{\Gamma(nm_1+\alpha+2)}{(nm_1)!\Gamma(\alpha+2)}}\sum_{k=0}^{\infty}\hat{\psi}_kz^{nm_1+k}
	\end{align*}
	and 
	\begin{align*}
		C_{\psi,\phi}e_{nm_2}(z)=\sqrt{\frac{\Gamma(nm_2+\alpha+2)}{(nm_2)!\Gamma(\alpha+2)}}\sum_{k=0}^{\infty}\hat{\psi}_kz^{nm_2+k}.
	\end{align*}
	Thus the matrix representation of the compression of $C_{\psi,\phi}$ to $M$ has the matrix representation
	\begin{align*} 
		\begin{pmatrix}
			1&0&0\\
			\sqrt{\frac{(nm_1)!\Gamma(\alpha+2)}{\Gamma(nm_1+\alpha+2)}}\hat{\psi}_{nm_1}&1&0\\
			\sqrt{\frac{(nm_2)!\Gamma(\alpha+2)}{\Gamma(nm_2+\alpha+2)}}\hat{\psi}_{nm_2}&
			\frac{\sqrt{\Gamma(nm_1+\alpha+2)\Gamma(\alpha+2)}(nm_2)!}{\sqrt{(nm_1)!}\Gamma(nm_2+\alpha+2)}\hat{\psi}_{n(m_1-m_2)}&1
		\end{pmatrix}.
	\end{align*}
	It follows from \cite[Th. 4.1]{KRS_LAA_1997} that the numerical range of the compression of $C_{\psi,\phi}$ to $M$ is the circle centred at $1$ with radius 
	$$\frac{1}{2}\sqrt{\frac{(nm_2)!\Gamma(\alpha+2)}{\Gamma(nm_2+\alpha+2)}\left(c|\hat{\psi}_{nm_1}|^2+|\hat{\psi}_{n(m_1-m_2)}|^2+\frac{1}{c}|\hat{\psi}_{nm_2}|^2\right)},$$ 
	where $c= \frac{(nm_1)!\Gamma(nm_2+\alpha+2)}{(nm_2)!\Gamma(nm_1+\alpha+2)}.$ As the numerical range of compression is contained in the numerical range of operator, this proves the desired result.
\end{proof}

\begin{theorem}
	Let	$C_{\psi,\phi} \in  \mathbb{B}\left({L_a^2(dA_{\alpha})}\right)$ be such that $\phi(z)=\lambda z$ with $\lambda=e^{2\pi i/n}$ and $\psi(z)=\sum_{k=0}^{\infty}\hat{\psi}_kz^k$ with $\hat{\psi}_{np+j} \neq 0$ for some $0<j<n.$ Then $W(C_{\psi,\phi})$ contains the ellipse with foci $\hat{\psi_0}$ and $\lambda^{np+j}\hat{\psi_0},$ and with major axis 
	$$\sqrt{|\hat{\psi_0}|^2|1-e^{2\pi ij/n}|^2+\frac{(np+j)!\Gamma(\alpha+2)}{\Gamma(np+j+\alpha+2)}|\hat{\psi}_{np+j}|^2}$$ 
	and minor axis $$\sqrt{\frac{(np+j)!\Gamma(\alpha+2)}{\Gamma(np+j+\alpha+2)}}|\hat{\psi}_{np+j}|.$$
\end{theorem}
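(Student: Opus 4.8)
The plan is to mimic the compression-plus-elliptical-range-theorem strategy already used in Theorem \ref{TH2} and, for the ellipse computation, in Theorem \ref{Theo2}. Recall two facts assumed earlier: the numerical range of a compression of an operator is contained in the numerical range of the operator (see \cite[Prop. 1.4]{wu_gau_book}), and the numerical range of a $2\times 2$ matrix is a (possibly degenerate) elliptical disc whose foci are its eigenvalues, with minor-axis length equal to the modulus of its off-diagonal entry (see \cite[Example 3]{GR_BOOK}). Hence it suffices to exhibit a two-dimensional subspace on which the compression of $C_{\psi,\phi}$ is a triangular matrix with the prescribed diagonal and sub-diagonal entries.

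First I would record the action of $C_{\psi,\phi}$ on the orthonormal basis. Since $\phi(z)=\lambda z$, one has $C_{\psi,\phi}e_m(z)=\lambda^m\sqrt{\frac{\Gamma(m+\alpha+2)}{m!\Gamma(\alpha+2)}}\,\psi(z)z^m$, and expanding $\psi(z)=\sum_k\hat{\psi}_kz^k$ and re-expressing each monomial $z^{m+k}$ in the basis $\{e_{m+k}\}$ gives
\begin{align*}
	\langle C_{\psi,\phi}e_m, e_{m'}\rangle = \lambda^m\,\hat{\psi}_{m'-m}\sqrt{\frac{\Gamma(m+\alpha+2)\,m'!}{m!\,\Gamma(m'+\alpha+2)}}
\end{align*}
for $m'\geq m$ and $0$ otherwise; in particular the diagonal entry is $\langle C_{\psi,\phi}e_m, e_m\rangle = \lambda^m\hat{\psi}_0$.

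The crucial choice is the subspace $M=\mathrm{span}\{e_0, e_{np+j}\}$. With this choice the two diagonal entries of the compression become $\hat{\psi}_0$ and $\lambda^{np+j}\hat{\psi}_0$, while the single nonzero off-diagonal entry is $\langle C_{\psi,\phi}e_0, e_{np+j}\rangle=\hat{\psi}_{np+j}\sqrt{\frac{(np+j)!\,\Gamma(\alpha+2)}{\Gamma(np+j+\alpha+2)}}$, which is nonzero by the hypothesis $\hat{\psi}_{np+j}\neq 0$. Thus the compression of $C_{\psi,\phi}$ to $M$ has matrix
\begin{align*}
	\begin{pmatrix}
		\hat{\psi}_0 & 0\\
		\hat{\psi}_{np+j}\sqrt{\frac{(np+j)!\,\Gamma(\alpha+2)}{\Gamma(np+j+\alpha+2)}} & \lambda^{np+j}\hat{\psi}_0
	\end{pmatrix}.
\end{align*}
By the elliptical range theorem its numerical range is the ellipse with foci $\hat{\psi}_0$ and $\lambda^{np+j}\hat{\psi}_0$, minor axis $\sqrt{\frac{(np+j)!\Gamma(\alpha+2)}{\Gamma(np+j+\alpha+2)}}|\hat{\psi}_{np+j}|$, and major axis $\sqrt{|\hat{\psi}_0-\lambda^{np+j}\hat{\psi}_0|^2+\frac{(np+j)!\Gamma(\alpha+2)}{\Gamma(np+j+\alpha+2)}|\hat{\psi}_{np+j}|^2}$. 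Since $\lambda^{np+j}=\lambda^j=e^{2\pi ij/n}$, we have $|\hat{\psi}_0-\lambda^{np+j}\hat{\psi}_0|^2=|\hat{\psi}_0|^2|1-e^{2\pi ij/n}|^2$, which reproduces the stated axis lengths; the containment of the compression's numerical range then completes the argument.

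I do not expect a genuine obstacle: the argument is structurally identical to the preceding two-by-two cases, so the only points requiring care are the bookkeeping of the $\Gamma$-normalization constants in the off-diagonal entry and the elementary observation that $\lambda^{np+j}=\lambda^j$ (so that $e_0$ and $e_{np+j}$ lie in distinct $n$-fold sectors). This last identity is precisely what separates the two foci and forces the major-axis formula into the displayed form; without it the ellipse would collapse.
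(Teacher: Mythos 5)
Your proposal is correct and follows essentially the same route as the paper: compress $C_{\psi,\phi}$ to $M=\mathrm{span}\{e_0,e_{np+j}\}$, read off the lower-triangular $2\times 2$ matrix with diagonal $\hat{\psi}_0,\ \lambda^{np+j}\hat{\psi}_0$ and off-diagonal entry $\sqrt{\tfrac{(np+j)!\Gamma(\alpha+2)}{\Gamma(np+j+\alpha+2)}}\hat{\psi}_{np+j}$, and apply the elliptical range theorem together with the containment of the compression's numerical range. Your extra bookkeeping (the general matrix entry formula and the explicit remark that $\lambda^{np+j}=\lambda^j\neq 1$, which keeps the ellipse nondegenerate) is consistent with, and slightly more detailed than, the paper's computation.
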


\begin{proof}
	Let $M$ be the subspace of $L_a^2(dA_{\alpha})$ spanned by $e_0$ and $e_{np+j}.$  We have
	\begin{align*}
		C_{\psi,\phi}e_0(z)=\sum_{k=0}^{\infty}\hat{\psi}_kz^{k}
	\end{align*}
	and
	\begin{align*}
		C_{\psi,\phi}e_{np+j}(z)=\lambda^{np+j} \sqrt{\frac{\Gamma(np+j+\alpha+2)}{(np+j)!\Gamma(\alpha+2)}}\left( \sum_{k=0}^{\infty}\hat{\psi}_kz^{np+j+k} \right).
	\end{align*}
	Thus the matrix representation of the compression of $C_{\psi,\phi}$ to $M$ has the matrix representation
	\begin{align*} 
		\begin{pmatrix}
			\hat{\psi_0}&0\\
			\sqrt{\frac{(np+j)!\Gamma(\alpha+2)}{\Gamma(np+j+\alpha+2)}}\hat{\psi}_{np+j} &\lambda^{np+j}\hat{\psi_0}
		\end{pmatrix}.
	\end{align*}
	Since $0<j<n$ so $\lambda^{np+j} \neq 1$ and hence the the numerical range of the compression of $C_{\psi,\phi}$ to $M$ is the ellipse with foci $\hat{\psi_0}$ and $\lambda^{np+j}\hat{\psi_0},$ and with major axis 
	$\sqrt{|\hat{\psi_0}|^2|1-e^{2\pi ij/n}|^2+\frac{(np+j)!\Gamma(\alpha+2)}{\Gamma(np+j+\alpha+2)}|\hat{\psi}_{np+j}|^2}$ and minor axis $\sqrt{\frac{(np+j)!\Gamma(\alpha+2)}{\Gamma(np+j+\alpha+2)}}|\hat{\psi}_{np+j}|,$ see \cite[Example 3]{GR_BOOK}. The desired result follows from the fact that the numerical range of compression is contained in the numerical range of operator.
\end{proof}

\begin{theorem}
	Let	$C_{\psi,\phi} \in  \mathbb{B}\left({L_a^2(dA_{\alpha})}\right)$ be such that $\phi(z)=e^{2\pi i\theta}z$ and $\psi(z)=\sum_{k=0}^{\infty}\hat{\psi}_kz^k,$ where $\theta$ is irrational. If $n \geq 0$ and $m>0$ then $W(C_{\psi,\phi})$ contains the ellipse with foci at $e^{2\pi in\theta }$ and $e^{2\pi i(n+m)\theta },$ and with major axis 
	$$\sqrt{|e^{2\pi in\theta }-e^{2\pi i(n+m)\theta }|^2+\frac{(n+m)!\Gamma(n+\alpha+2)}{n!\Gamma(n+m+\alpha+2)}|\hat{\psi}_{m}|^2}$$
	and minor axis $\sqrt{\frac{(n+m)!\Gamma(n+\alpha+2)}{n!\Gamma(n+m+\alpha+2)}}|\hat{\psi}_{m}|.$
\end{theorem}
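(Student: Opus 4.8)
The plan is to follow the compression method used throughout the preceding ellipse theorems. I fix $n\ge 0$ and $m>0$ and let $M$ be the two-dimensional subspace of $L_a^2(dA_\alpha)$ spanned by the orthonormal vectors $e_n$ and $e_{n+m}$. The idea is to write down the matrix of the compression of $C_{\psi,\phi}$ to $M$ relative to $\{e_n,e_{n+m}\}$, identify its numerical range as a closed elliptical disk by the elliptical range theorem (\cite[Example 3]{GR_BOOK}), and then conclude using the fact that the numerical range of a compression is contained in that of the operator (\cite[Prop. 1.4]{wu_gau_book}).

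The key observation is that $\phi(z)=e^{2\pi i\theta}z$ acts diagonally on the monomial basis: since $e_k(z)=\sqrt{\Gamma(k+\alpha+2)/(k!\,\Gamma(\alpha+2))}\,z^k$, one has $e_k\circ\phi=e^{2\pi ik\theta}e_k$, and therefore $C_{\psi,\phi}e_k=e^{2\pi ik\theta}\,\psi\,e_k$. Expanding $\psi(z)=\sum_j\hat\psi_jz^j$ and reading off the coefficients of $e_n$ and $e_{n+m}$ gives diagonal entries $e^{2\pi in\theta}\hat\psi_0$ and $e^{2\pi i(n+m)\theta}\hat\psi_0$; the degree-$(n+m)$ term of $C_{\psi,\phi}e_n$ comes solely from $\hat\psi_m$, producing the lower-left entry $e^{2\pi in\theta}\sqrt{(n+m)!\,\Gamma(n+\alpha+2)/(n!\,\Gamma(n+m+\alpha+2))}\,\hat\psi_m$, while the upper-right entry vanishes because $C_{\psi,\phi}e_{n+m}$ carries no terms of degree below $n+m$. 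Thus the compression is a lower-triangular $2\times 2$ matrix, and when $\hat\psi_0=\psi(0)=1$ (the normalisation under which the stated foci and major-axis formula are written) its diagonal entries are exactly $e^{2\pi in\theta}$ and $e^{2\pi i(n+m)\theta}$. I would also verify the small algebraic identity that the normalising constants $\sqrt{\Gamma(n+\alpha+2)/(n!\,\Gamma(\alpha+2))}$ and $\sqrt{(n+m)!\,\Gamma(\alpha+2)/\Gamma(n+m+\alpha+2)}$ collapse to the quoted factor $\sqrt{(n+m)!\,\Gamma(n+\alpha+2)/(n!\,\Gamma(n+m+\alpha+2))}$, which is routine cancellation.

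Applying the elliptical range theorem to this triangular matrix then gives the closed elliptical disk whose foci are the two diagonal (eigen)values, whose minor axis equals the modulus of the single nonzero off-diagonal entry, and whose major axis equals $\sqrt{|\text{difference of foci}|^2+|\text{off-diagonal}|^2}$. Substituting the entries above produces minor axis $\sqrt{(n+m)!\,\Gamma(n+\alpha+2)/(n!\,\Gamma(n+m+\alpha+2))}\,|\hat\psi_m|$ and major axis $\sqrt{|e^{2\pi in\theta}-e^{2\pi i(n+m)\theta}|^2+\tfrac{(n+m)!\,\Gamma(n+\alpha+2)}{n!\,\Gamma(n+m+\alpha+2)}|\hat\psi_m|^2}$, exactly as claimed, and the compression inclusion transfers this set into $W(C_{\psi,\phi})$.

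The single point that genuinely requires the hypotheses — and the thing to check rather than a routine computation — is that the ellipse is nondegenerate, i.e. that the two foci are distinct; this is the role played earlier by the condition $0<j<n$ in the analogous ellipse theorem. Here $e^{2\pi in\theta}=e^{2\pi i(n+m)\theta}$ would force $e^{2\pi im\theta}=1$, that is $m\theta\in\mathbb{Z}$, which is impossible because $m\ge 1$ and $\theta$ is irrational. Hence the foci are distinct and the numerical range of the compression is a genuine ellipse (degenerating to the segment joining the foci only in the excluded-looking case $\hat\psi_m=0$), so the irrationality of $\theta$ is precisely what guarantees the stated conclusion.
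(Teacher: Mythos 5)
Your proposal is correct and follows essentially the same route as the paper: compress $C_{\psi,\phi}$ to the span of $e_n$ and $e_{n+m}$, read off the lower-triangular $2\times 2$ matrix, and apply the elliptical range theorem together with the inclusion of the numerical range of a compression. You go slightly beyond the paper in two welcome ways --- you make explicit that the stated foci $e^{2\pi i n\theta}$ and $e^{2\pi i(n+m)\theta}$ presuppose the normalisation $\hat\psi_0=1$ (the paper's matrix silently drops the factor $\hat\psi_0$ from the diagonal), and you spell out why the irrationality of $\theta$ with $m>0$ forces the foci to be distinct; the only cosmetic difference is the phase of the off-diagonal entry ($e^{2\pi in\theta}$ in your computation versus $e^{2\pi i(n+m)\theta}$ in the paper's), which is immaterial since only its modulus enters the ellipse.
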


\begin{proof}
	Let $M$ be the subspace of $L_a^2(dA_{\alpha})$ spanned by $e_n$ and $e_{n+m}.$ Then we have 
	\begin{align*}
		C_{\psi,\phi}e_n(z)=e^{2\pi in\theta }\sqrt{\frac{\Gamma(n+\alpha+2)}{n!\Gamma(\alpha+2)}}\sum_{k=0}^{\infty}\hat{\psi}_kz^{k+n}
	\end{align*}
	and
	\begin{align*}
		C_{\psi,\phi}e_{n+m}(z)=e^{2\pi i(n+m)\theta }\sqrt{\frac{\Gamma(n+m+\alpha+2)}{(n+m)!\Gamma(\alpha+2)}}\sum_{k=0}^{\infty}\hat{\psi}_kz^{k+n+m}.
	\end{align*}
	Thus the matrix representation of the compression of $C_{\psi,\phi}$ to $M$ has the matrix representation
	\begin{align*} 
		\begin{pmatrix}
			e^{2\pi in\theta}&0\\
			e^{2\pi i(n+m)\theta }\sqrt{\frac{(n+m)!\Gamma(n+\alpha+2)}{n!\Gamma(n+m+\alpha+2)}}\hat{\psi}_{m}&e^{2\pi i(n+m)\theta}
		\end{pmatrix}.
	\end{align*}
	Therefore, the numerical range of the compression of $C_{\psi,\phi}$ to $M$ is the ellipse with foci $e^{2\pi in\theta }$ and $e^{2\pi i(n+m)\theta },$ and with major axis 
	$$\sqrt{|e^{2\pi in\theta }-e^{2\pi i(n+m)\theta }|^2+\frac{(n+m)!\Gamma(n+\alpha+2)}{n!\Gamma(n+m+\alpha+2)}|\hat{\psi}_{m}|^2}$$
	and minor axis $\sqrt{\frac{(n+m)!\Gamma(n+\alpha+2)}{n!\Gamma(n+m+\alpha+2)}}|\hat{\psi}_{m}|,$ see \cite[Example 3]{GR_BOOK}. The desired result follows as the numerical range of compression is contained in the numerical range of operator.
\end{proof}

We end with the conclusion that the results of this section may be useful to estimate the lower bounds of numerical radius of some classes of weighted composition operators acting on $L_a^2(dA_{\alpha}).$

\textit{Acknowledgements.} Mr. Anirban Sen would like to thank CSIR, Govt. of India, for the financial support in the form of Senior Research Fellowship under the mentorship of
Prof. Kallol Paul. Mr. Subhadip Halder would like to thank UGC, Govt. of India for the financial support in the form of Junior Research Fellowship under the mentorship of Dr. Riddhick Birbonshi.

\bibliographystyle{amsplain}

\begin{thebibliography}{99}
	

\bibitem{AFW_JFA_1993} P. Ahern, M. Flores and W. Rudin, An invariant volume mean value property, J. Funct. Anal., 111 (1993), 380--397.
	
\bibitem{BS_IEOT_2002} P.S. Bourdon and J.H. Shapiro, When is zero in the numerical range of a composition operator?, Integral Equations Operator Theory, 44 (2002), 410--441.


\bibitem{BS_JMAA_2000} P.S. Bourdon and J.H. Shapiro, The numerical ranges of automorphic composition operators, J. Math. Anal. Appl., 251 (2000), 839--854.


\bibitem{CZ_IJM_2007} Z. Čučković and R. Zhao, Weighted composition operators between different weighted Bergman spaces and different Hardy spaces, Illinois J. Math., 51 (2) (2007), 479--498.





\bibitem{FHS_CVEE_2015} M. Fatehi and M.H. Shaabani, Some essentially normal weighted composition operators on the weighted Bergman spaces, Complex Var. Elliptic Equ., 60 (2015), no. 9, 1205--1216.
	
	
\bibitem{G_AAA_2009} D. Gu, The numerical range of Toeplitz operator on the polydisk, Abstr. Appl. Anal., (2009), Art. ID 757964, 8 pp.




\bibitem{GJS_JMAA_2014} G. Gunatillake, M. Jovovic and W. Smith, Numerical ranges of weighted composition operators, J. Math. Anal. Appl., 413 (2014), no. 1, 458--475.



\bibitem{GR_BOOK} K.E. Gustafson and D.K.M. Rao, Numerical Range, Springer, (1997).
	
	
\bibitem{H_BOOK} P.R. Halmos, A Hilbert space problem book, Vam Nostrand, Princeton, N.J., (1967). 	
	
 \bibitem{KRS_LAA_1997} S. Keeler, L. Rodman and M. Spitkovsky, The numerical range of $3 \times 3$ matrices, Linear Algebra Appl., 252 (1997), 115--139. 
 
 
 \bibitem{KEY_JME_2021} M.M. Khorami, F. Ershad and B. Yousefi, On the numerical range of some bounded operators, J. Math. Ext., 15 (2021), no. 2, 1--14.
 
 
  \bibitem{K_PAMS_1972} E.M. Klein, The numerical range of a Toeplitz operator, Proc. Amer. Math. Soc., 35 (1972), 101--103.
  
 
  \bibitem{M_LAA_2001} V. Matache, Numerical ranges of composition operators, Linear Algebra Appl., 331 (2001), 61--74.
  
\bibitem{MS_IUMJ_1979} G. McDonald and C. Sundberg, Toeplitz operators on the disc, Indiana Univ. Math. J., 28 (1979), 595--611.

\bibitem{M_JFA_2005} J L. Moorhouse, Compact differences of composition operators,
J. Funct. Anal., 219 (2005), no. 1, 70--92.

\bibitem{RUDIN_BOOK} W. Rudin, Real and complex analysis, McGraw-Hill Book Co., New York, (1987).

  
\bibitem{T_JOT_1995} J. K. Thukral, The numerical range of a Toeplitz operator with harmonic symbol, J. Operator Theory, 34 (1995), no. 2, 213--216.
 
 
 
 \bibitem{WW_IEOT_2009} K.Z. Wang and P.Y. Wu, Numerical ranges of radial Toeplitz operators on Bergman space, Integral Equations Operator Theory, 65 (2009), no. 4, 581--591. 
 
 \bibitem{wu_gau_book} P.Y. Wu and H.-L. Gau, Numerical ranges of Hilbert space operators, Encyclopedia Math. Appl., 179, Cambridge University Press, Cambridge, (2021).
  
  
\bibitem{ZHU_BOOK}  K. Zhu, Operator theory in function spaces, Math. Surveys Monogr., 138
  American Mathematical Society, Providence, RI, (2007).
  
  
\bibitem{ZOR_JMAA_2018}  N. Zorboska, Isometric weighted composition operators on weighted Bergman spaces, J. Math. Anal. Appl., 461 (2018), no. 1, 657--675.
  


  
  













 
 





   
\end{thebibliography}

\end{document}